\documentclass[11pt]{amsart}

\usepackage{amsmath,amsfonts,amsthm,amssymb,graphicx,tikz,color}
\usepackage[all,2cell,ps]{xy}

\bibliographystyle{plain}

\theoremstyle{plain}
\newtheorem{thm}{Theorem}[section]
\newtheorem{lemma}[thm]{Lemma}
\newtheorem{prop}[thm]{Proposition}
\newtheorem{cor}[thm]{Corollary}

\newtheorem{qtn}[thm]{Question}

\theoremstyle{definition}

\newtheorem{ex}[thm]{Example}

\newtheorem{rem}[thm]{Remark}

\theoremstyle{remark}

\DeclareMathOperator{\SL}{SL} \DeclareMathOperator{\PSL}{PSL} 
\DeclareMathOperator{\GL}{GL} \DeclareMathOperator{\PGL}{PGL}
 \DeclareMathOperator{\PO}{PO}
 \DeclareMathOperator{\SO}{SO}

\DeclareMathOperator{\SU}{SU}

 \DeclareMathOperator{\Ad}{Ad}

\DeclareMathOperator{\supp}{supp}

\DeclareMathOperator{\Isom}{Isom}

\DeclareMathOperator{\rank}{\mathrm{rank}}




\newcommand{\wh}{\widehat}


\newcommand{\bbC}{\mathbb{C}}

\newcommand{\bbH}{\mathbb{H}}

\newcommand{\bbP}{\mathbb{P}}

\newcommand{\bbR}{\mathbb{R}}

\newcommand{\bfA}{\mathbf{A}}
\newcommand{\bfB}{\mathbf{B}}

\newcommand{\bfG}{\mathbf{G}}
\newcommand{\bfH}{\mathbf{H}}
\newcommand{\bfI}{\mathbf{I}}
\newcommand{\bfJ}{\mathbf{J}}

\newcommand{\bfL}{\mathbf{L}}
\newcommand{\bfM}{\mathbf{M}}
\newcommand{\bfN}{\mathbf{N}}

\newcommand{\bfP}{\mathbf{P}}
\newcommand{\bfQ}{\mathbf{Q}}
\newcommand{\bfR}{\mathbf{R}}
\newcommand{\bfS}{\mathbf{S}}

\newcommand{\bfU}{\mathbf{U}}
\newcommand{\bfV}{\mathbf{V}}
\newcommand{\bfW}{\mathbf{W}}

\newcommand{\frako}{\mathfrak{o}}

\newcommand{\fraks}{\mathfrak{s}}

\newcommand{\gam}{\gamma}
\newcommand{\Gam}{\Gamma}

\newcommand{\Del}{\Delta}

\newcommand{\Lam}{\Lambda}

\DeclareMathOperator{\Tr}{Tr}

\newcommand{\wt}{\widetilde}





\newcommand{\N}{\ensuremath{\mathbb{N}}}

\newcommand{\calS}{\ensuremath{\mathcal{S}}}

\newcommand{\bH}{\ensuremath{\mathbf{H}}}

\newcommand{\ssm}{\smallsetminus}

\usetikzlibrary{arrows}

\title[Totally Geodesic Submanifolds]{Arithmeticity, Superrigidity, and Totally Geodesic Submanifolds}
\author[U Bader]{Uri Bader}
\address{Weizmann Institute of Science}
\email{bader@weizmann.ac.il}
\author[D Fisher]{David Fisher}
\address{Department of Mathematics\\Indiana University\\Bloomington, IN 47405}
\email{fisherdm@indiana.edu, nimimill@iu.edu}
\author[N Miller]{Nicholas Miller}
\author[M Stover]{Matthew Stover}
\address{Department of Mathematics\\Temple University\\Philadelphia, PA 19122}
\email{mstover@temple.edu}

\begin{document}

\begin{abstract}
Let $\Gam$ be a lattice in $\SO_0(n, 1)$. We prove that if the associated locally symmetric space contains infinitely many maximal totally geodesic subspaces of dimension at least $2$, then $\Gam$ is arithmetic. This answers a question of Reid for hyperbolic $n$-manifolds and, independently, McMullen for hyperbolic $3$-manifolds. We prove these results by proving a superrigidity theorem for certain representations of such lattices. The proof of our superrigidity theorem uses results on equidistribution from homogeneous dynamics and our main result also admits a formulation in that language.
\end{abstract}

\maketitle


\section{Introduction}\label{sec:Intro}

In this paper, a totally geodesic subspace of a finite volume hyperbolic manifold or orbifold will always mean a properly immersed, topologically closed, totally geodesic subspace. A totally geodesic subspace is \emph{maximal} if it is not properly contained in another proper totally geodesic subspace. The main result of this paper is:

\begin{thm}
\label{thm:main}
Let $\Gam$ be a lattice in $\SO_0(n, 1)$. If the associated locally symmetric space contains infinitely many maximal totally geodesic subspaces of dimension at least $2$, then $\Gam$ is arithmetic.
\end{thm}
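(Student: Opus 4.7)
The plan is to prove the contrapositive: assuming $\Gamma < G := \SO_0(n,1)$ is non-arithmetic, we show that the associated locally symmetric space contains only finitely many maximal totally geodesic subspaces of dimension at least $2$. By Margulis's commensurator criterion, non-arithmeticity means that $\Comm_G(\Gamma)$ is a finite extension of $\Gamma$ and hence discrete in $G$. The strategy is to derive a contradiction by producing, from an infinite family of totally geodesic subspaces, elements of $\Comm_G(\Gamma)$ far from $\Gamma$.

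\emph{Setup and equidistribution.} Every totally geodesic subspace of dimension $k \ge 2$ corresponds to a $\Gamma$-conjugacy class of closed connected subgroups $H < G$ locally isomorphic to $\SO_0(k,1)$ with $H \cap \Gamma$ a lattice in $H$. Since $k \ge 2$, each such $H$ is generated by its unipotents, so $H\Gamma/\Gamma$ is a closed orbit in $G/\Gamma$ of finite volume, carrying a canonical $H$-invariant probability measure $\mu_H$. Assume for contradiction that there is an infinite pairwise non-conjugate sequence $(H_i)$. By the Mozes--Shah theorem, a consequence of Ratner's measure-classification results, any weak-$\ast$ subsequential limit of the $\mu_{H_i}$ is the Haar measure of a closed orbit of a connected subgroup $L < G$ generated by unipotents and containing $G$-conjugates of cofinitely many $H_i$. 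The classification of such subgroups of $\SO_0(n,1)$ combined with the maximality hypothesis on each $H_i$ (which prevents $L$ from being a proper intermediate $\SO_0(j,1)$) forces $L = G$. Hence the measures $\mu_{H_i}$ equidistribute to the Haar measure on $G/\Gamma$.

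\emph{From equidistribution to superrigidity.} Using this equidistribution one selects elements $g_i \in G$ so that $g_i H_1 g_i^{-1}$ approaches $H_i$ while $g_i(H_1 \cap \Gamma)g_i^{-1}$ has increasingly large intersection with $\Gamma$. Packaging these matchings over $i$ yields, from the infinite family, a non-trivial representation of $\Gamma$ into an algebraic group which cannot be accounted for by the tautological inclusion $\Gamma \hookrightarrow G$. At this point one invokes the superrigidity theorem announced in the abstract: any such representation extends on a finite-index subgroup to an algebraic representation of $G$ itself. Chasing the extended representation back through the matching elements $g_i$ produces elements of $\Comm_G(\Gamma) \setminus \Gamma$ that cannot be absorbed into any finite extension of $\Gamma$, contradicting the discreteness of $\Comm_G(\Gamma)$ recorded at the outset.

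The principal obstacle is the superrigidity step. Classical Margulis superrigidity does not apply because $G = \SO_0(n,1)$ has real rank one, so one needs a genuinely new superrigidity result tailored to the representations manufactured by the equidistribution argument above; identifying the correct target category of representations, and proving extendability, is where the main novel work lies. The dynamical input, though technical, rests on now-standard tools (Ratner, Mozes--Shah); the heart of the proof is the superrigidity theorem and its use to produce unexpected commensurating elements.
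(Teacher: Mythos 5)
Your proposal gets the dynamical setup roughly right: reducing the hypothesis to a weak-$\ast$ limit of homogeneous $W$-ergodic measures equidistributing to Haar on $G/\Gamma$ is essentially the content of the paper's Proposition~\ref{prop:homogen}. (The maximality argument is slightly trickier than you suggest --- the paper rules out a proper limit by showing the $Z_i$ would all coincide with a single geodesic subspace, not merely that $L$ cannot be an intermediate $\SO_0(j,1)$ --- but the skeleton is sound.) The gap is in everything after ``From equidistribution to superrigidity.''

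First, you never say what representation of $\Gamma$ you intend to superrigidify, nor into what group. ``Packaging the matchings $g_i$'' into a representation is not a construction; the $g_i$ furnished by Mozes--Shah are group elements converging to the identity, not the data of a homomorphism. The paper's representations are the natural ones $\rho_v:\Gamma\to\bfG(\ell_v)\to\bfH(\ell_v)$ coming from the adjoint trace field $\ell$ and its places $v$, and the equidistribution is used to build a $W$-invariant measure on a projective bundle $(G\times\bbP(V))/\Gamma$ by pushing forward along sections determined by the stabilizers of Lie algebras of the groups $S_i$. None of this appears in your sketch, and without it Theorem~\ref{theorem:superrigiditydichotomy} has no input. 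Second, and more fundamentally, your endgame is the commensurator criterion: derive a contradiction by producing elements of $\Comm_G(\Gamma)\ssm\Gamma$. The paper does not take this route at all, and you give no mechanism by which an extension $\wh\rho:G\to\bfH(k)$ would yield commensurating elements --- there isn't one in general. Margulis's deduction of arithmeticity from superrigidity runs differently: for a place $v\neq w$ of $\ell$, an extension of $\rho_v$ to $G$ would force a continuous field embedding $\ell_w\to\ell_v$ extending $v$, which is impossible; hence $\rho_v$ must have bounded image, and boundedness at every $v\neq w$ is precisely arithmeticity. That is the argument the paper runs (\S\ref{ssec:Deduce}), and it requires superrigidity over $p$-adic as well as archimedean fields, which in turn is why the compatibility condition and the choice of irreducible summand $V$ in Proposition~\ref{prop:liftingmeasures} matter. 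Your proposal, as written, neither constructs the correct representations nor explains how extension of anything yields the needed contradiction.
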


This answers a question, first posed informally by Alan Reid in the mid-2000s. Independently, Curtis McMullen asked whether Theorem \ref{thm:main} is true in the setting of hyperbolic $3$-manifolds (see \cite[Qn.\ 7.6]{Curt} or \cite[Qn.\ 8.2]{MR}). Theorem \ref{thm:main} is also motivated in part by a question of Gromov and Piatetski-Shapiro \cite[Qn.\ 0.4]{GPS}. In a prior paper with J.-F.\ Lafont, the last three authors proved that a large class of nonarithmetic hyperbolic $n$-manifolds, including all the hybrids constructed by Gromov and Piatetski-Shapiro, have only finitely many maximal totally geodesic submanifolds \cite{FLMS}. This provided the first known examples of hyperbolic $n$-manifolds, $n \ge 3$, for which the collection of totally geodesic hypersurfaces is finite and nonempty. The case when $M$ is a closed hyperbolic $3$-manifold was very recently and independently proved by Margulis and Mohammadi \cite{MM}. Their proof and ours both use a superrigidity theorem to prove arithmeticity, but the superrigidity theorems and their proofs are quite different.

We now briefly give some applications of Theorem \ref{thm:main} and its proof. First, combining Theorem \ref{thm:main} with a theorem of Reid \cite{ReidKnot} we obtain the following.

\begin{cor}
\label{cor:knots}
Let $K$ be a knot in $S^3$ such that $S^3 \ssm K$ admits a complete hyperbolic structure. Then $S^3 \ssm K$ contains infinitely many immersed totally geodesic surfaces if and only if $K$ is the figure-eight knot.
\end{cor}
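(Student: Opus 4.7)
The strategy is to combine Theorem~\ref{thm:main} with Reid's result \cite{ReidKnot} that the figure-eight knot is the only knot in $S^3$ whose complement admits an arithmetic hyperbolic structure.

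For the forward direction, suppose $S^3 \ssm K$ contains infinitely many immersed totally geodesic surfaces. Since $S^3 \ssm K$ is a hyperbolic $3$-manifold, every proper totally geodesic subspace of dimension at least $2$ is a totally geodesic surface, and each such surface is automatically maximal in the sense of this paper (there is no room for a proper totally geodesic subspace of strictly larger dimension). Hence the hypothesis of Theorem~\ref{thm:main} is satisfied, and we conclude that $\pi_1(S^3 \ssm K)$ is an arithmetic lattice in $\SO_0(3,1)$. Reid's theorem then forces $K$ to be the figure-eight knot.

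For the converse, the figure-eight knot complement is commensurable with the Bianchi orbifold $\bbH^3/\PSL_2(\mathcal O_3)$, which is arithmetic of simplest type. It is classical that any noncocompact arithmetic hyperbolic $3$-manifold contains infinitely many immersed totally geodesic surfaces: they arise from the Fuchsian subgroups stabilizing rational geodesic planes in the relevant quadratic form model, of which there are infinitely many commensurability classes. Transferring these to a finite cover produces infinitely many immersed totally geodesic surfaces in $S^3 \ssm K$ itself.

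There is essentially no substantive obstacle in the deduction; once Theorem~\ref{thm:main} and Reid's arithmeticity criterion for knot complements are in hand, the corollary is immediate, with the mild observation that in dimension $3$ there is no distinction between ``infinitely many totally geodesic surfaces'' and ``infinitely many maximal totally geodesic subspaces of dimension at least $2$.''
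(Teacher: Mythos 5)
Your proof is correct and takes exactly the approach the paper intends: the paper simply states that the corollary follows by combining Theorem~\ref{thm:main} with Reid's theorem that the figure-eight knot is the unique arithmetic knot, and your write-up correctly supplies the two details left implicit (that in dimension $3$ every closed totally geodesic surface is automatically maximal, and that the figure-eight complement, being commensurable with a Bianchi orbifold containing totally geodesic surfaces, has infinitely many by the arithmetic dichotomy).
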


Combining Theorem \ref{thm:main} with results of Benoist--Oh \cite[Thm. 10.1]{BO}, Lee--Oh \cite[Thm. 1.9(3)]{LO}, and the classification of arithmetic hyperbolic $n$-manifolds (e.g., see \cite{Meyer}), we also obtain the following.

\begin{cor}
\label{cor:kleinian}{\ }
\begin{enumerate}
\item If $M$ is a geometrically finite hyperbolic $3$-manifold containing infinitely many totally geodesic surfaces with finite area, then $M$ has finite volume and $\pi_1(M)$ is arithmetic.

\item If $M$ is a convex cocompact hyperbolic $n$-manifold containing infinitely many maximal totally geodesic surfaces with finite area, then $M$ is compact and $\pi_1(M)$ is arithmetic.

\item If $n \ge 4$ is even and $M$ is a finite volume hyperbolic $n$-manifold, then $M$ is arithmetic if and only if it contains infinitely many totally geodesic hypersurfaces.

\end{enumerate}
\end{cor}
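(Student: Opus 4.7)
The plan is uniform across the three parts: first promote the hypothesis to the finite-volume regime in which Theorem \ref{thm:main} is applicable, then verify that the given totally geodesic subspaces satisfy the maximality requirement of that theorem.

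For part (1), I would invoke \cite[Thm. 10.1]{BO} of Benoist--Oh, which forces a geometrically finite hyperbolic $3$-manifold containing infinitely many finite-area totally geodesic surfaces to have finite volume. Then $\Gam = \pi_1(M)$ is a lattice in $\SO_0(3,1)$, and since a proper totally geodesic subspace of a $3$-manifold has dimension at most $2$, every $2$-dimensional totally geodesic subspace is automatically maximal. Theorem \ref{thm:main} then yields arithmeticity. Part (2) is handled by the identical argument, with \cite[Thm. 1.9(3)]{LO} of Lee--Oh in place of Benoist--Oh to force the convex cocompact manifold $M$ to be compact. The maximality hypothesis is not automatic here --- for $n \ge 4$ a $2$-dimensional totally geodesic subspace could lie inside a larger proper totally geodesic subspace --- but it is supplied by assumption, so Theorem \ref{thm:main} again applies.

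For part (3), the ``only if'' direction is immediate from Theorem \ref{thm:main}, since a codimension-$1$ totally geodesic subspace is automatically maximal. The ``if'' direction requires producing infinitely many properly immersed totally geodesic hypersurfaces in every arithmetic hyperbolic $n$-manifold with $n \ge 4$ even. Here the classification of arithmetic hyperbolic lattices (see \cite{Meyer}) is essential: for even $n \ge 4$, every arithmetic lattice in $\SO_0(n,1)$ is commensurable with one arising from the orthogonal group of a quadratic form of signature $(n,1)$ over a totally real number field, there being no exotic families (of quaternionic or triality type) in these dimensions. For such a lattice, the hyperplanes rational over the base field on which the defining quadratic form has signature $(n-1,1)$ yield infinitely many commensurability classes of properly immersed totally geodesic hypersurfaces in $M$. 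The main conceptual subtlety in the whole corollary is precisely this absence-of-exotics portion of the classification; once that is granted, both directions collapse to short chains of the cited results and standard geometric bookkeeping.
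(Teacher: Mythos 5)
Your proof is correct and matches the paper's intended approach, which is simply to combine Theorem~\ref{thm:main} with the cited results of Benoist--Oh and Lee--Oh (to promote the geometrically finite, resp.\ convex cocompact, hypothesis to finite volume, resp.\ compactness) and the classification of arithmetic lattices in $\SO_0(n,1)$ for even $n$ (to supply infinitely many hypersurfaces in the arithmetic case, using that only quadratic-form type occurs there). One small labeling slip in part (3): you have interchanged ``if'' and ``only if''---the implication handled by Theorem~\ref{thm:main} (infinitely many hypersurfaces $\Rightarrow$ arithmetic) is the ``if'' direction of the biconditional as stated, while producing hypersurfaces from arithmeticity is the ``only if'' direction---but the substantive argument you give for each implication is the right one.
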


For convex cocompact acylindrical $3$-manifolds, this result already follows from work of McMullen--Mohammadi--Oh \cite{MMO, MMO2} and Theorem \ref{thm:main}. See \S\ref{ssec:FinalB} for discussion of (3) in odd dimensions.

Methods analogous to those used in the proof of Theorem \ref{thm:main} can also be used to show the following.

\begin{thm}
\label{thm:dehnfilling}
Let $M$ be a cusped hyperbolic $3$-manifold of finite volume with at least one torus cusp, and $N$ a hyperbolic $3$-manifold obtained by Dehn filling on some nonempty subset of the torus cusps of $M$. Then only finitely many totally geodesic surfaces in $N$ are isotopic to the image of a totally geodesic surface in $M$.
\end{thm}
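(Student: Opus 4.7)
The plan is to reduce the statement to an application of the superrigidity theorem used to prove Theorem \ref{thm:main}. Write $\Gam = \pi_1(M)$ with discrete faithful holonomy $\rho_M : \Gam \to \PSL_2(\bbC)$, and let $\rho_N : \pi_1(N) \to \PSL_2(\bbC)$ be the holonomy of $N$. The Dehn filling induces a surjection $\pi : \Gam \twoheadrightarrow \pi_1(N)$ whose kernel is normally generated by the filling slopes $\alpha_1, \dots, \alpha_k \in \Gam$. Set $\tilde\rho_N = \rho_N \circ \pi : \Gam \to \PSL_2(\bbC)$. Each $\rho_M(\alpha_j)$ is a nontrivial parabolic element of $\PSL_2(\bbC)$, while $\tilde\rho_N(\alpha_j) = 1$; in particular $\tilde\rho_N$ is not injective and is not $\PSL_2(\bbC)$-conjugate to $\rho_M$.

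Suppose for contradiction that there are infinitely many totally geodesic surfaces $S_i \subset M$ whose images in $N$ are isotopic to totally geodesic surfaces $T_i \subset N$. Then for every $i$, both $\rho_M(\pi_1(S_i))$ and $\tilde\rho_N(\pi_1(S_i)) \cong \rho_N(\pi_1(T_i))$ lie in conjugates of $\PSL_2(\bbR) \subset \PSL_2(\bbC)$. In other words, $\tilde\rho_N$ is a representation of the lattice $\Gam \subset \PSL_2(\bbC)$ sending infinitely many distinct Fuchsian subgroups of $\Gam$ into conjugates of $\PSL_2(\bbR)$. This is precisely the hypothesis under which the superrigidity theorem developed for Theorem \ref{thm:main} applies. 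Its conclusion, applied to $\tilde\rho_N$, is that (after passing to a finite-index subgroup if necessary) $\tilde\rho_N$ extends to a continuous homomorphism $\PSL_2(\bbC) \to \PSL_2(\bbC)$ of ambient Lie groups. Since $\rho_N$ is faithful on the infinite group $\pi_1(N)$, the extension is nontrivial, and every nontrivial continuous endomorphism of $\PSL_2(\bbC)$ is an automorphism---inner, possibly composed with complex conjugation---and therefore injective. This contradicts $\tilde\rho_N(\alpha_j) = 1 \ne \rho_M(\alpha_j)$, completing the proof.

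The main obstacle will be verifying that the superrigidity theorem applies to $\tilde\rho_N$ in this slightly nonstandard setting. Three points require care. First, the surfaces $S_i$ can meet the filled cusps of $M$, so the $\pi_1(S_i)$ may be non-uniform Fuchsian lattices; the Ratner-type equidistribution of Fuchsian orbits on $\PSL_2(\bbC) / \rho_M(\Gam)$ underlying Theorem \ref{thm:main} must accommodate such cusped Fuchsian subgroups. Second, one must ensure that the infinitely many $\pi_1(S_i)$ yield infinitely many distinct inputs to the superrigidity hypothesis rather than finitely many commensurability classes repeating---equivalently, that the corresponding periodic $\PSL_2(\bbR)$-orbits in $\PSL_2(\bbC)/\rho_M(\Gam)$ are genuinely distinct. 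Third, the superrigidity theorem must be formulated to allow the target representation $\tilde\rho_N$ to have nontrivial kernel; here Zariski density of $\tilde\rho_N$ is immediate from the fact that $\rho_N(\pi_1(N))$ is a lattice in $\PSL_2(\bbC)$. Each of these is an analogue of a subtlety already handled in the proof of Theorem \ref{thm:main}, and the same homogeneous-dynamics machinery is expected to resolve them.
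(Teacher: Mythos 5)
Your proposal follows essentially the same approach as the paper's proof: use the Dehn-filling surjection $\Gam \to \pi_1(N)$ as a Zariski dense, unbounded, non-injective representation, observe that each totally geodesic surface surviving the filling gives a Fuchsian subgroup whose image lands in a conjugate of a totally geodesic $\PSL_2(\bbR)$ (equivalently, $\Isom^+(\bbH^2) \subset \SO_0(3,1)$), feed this family into the measure construction and superrigidity theorem (Theorem \ref{theorem:superrigiditydichotomy}) to conclude that the representation would have to extend continuously to the ambient group, and then contradict the nontrivial kernel via simplicity of the target. The paper phrases the target as $\PO_0(3,1)$ and constructs the invariant measure explicitly via the third exterior power of the adjoint representation (as in Proposition \ref{prop:liftingmeasures}), whereas you defer the measure construction to the existing machinery, but the logical structure and key lemma used are identical.
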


If either $M$ or $N$ is nonarithmetic then this simply follows from Theorem \ref{thm:main}. However, there are examples where $M$ and $N$ are both arithmetic and some totally geodesic surface in $M$ remains totally geodesic in $N$, and hence Theorem \ref{thm:main} is not relevant. See \S\ref{ssec:FinalA} for the proof of Theorem \ref{thm:dehnfilling}, discussion, and examples.

\medskip

Our approach to proving Theorem \ref{thm:main} is inspired by the Margulis superrigidity and arithmeticity theorems \cite{MargulisICM, MargulisSuperrigidity}. The superrigidity theorem gives criteria for when a representation of $\Gamma$ extends to a representation of the ambient Lie group $G$. Arithmeticity is then deduced using these criteria to control the representations of $\Gam$ one obtains by varying embeddings of the adjoint trace field of $\Gam$ into other local fields. See \S\ref{ssec:Deduce} for more discussion. A famous example of this strategy is the proof by Margulis of arithmeticity of lattices with \emph{dense commensurator} \cite{MargulisICM}. This theorem also holds in rank one and is the full converse to a theorem of Borel \cite{BorelCrelle}. Margulis proved this by classifying representations of lattices that extend to representations of some dense subgroup of $G$ contained in the commensurator.

Relating dense commensurators of arithmetic lattices back to the existence of infinitely many totally geodesic submanifolds, one can easily observe:

\medskip
\noindent
\textbf{Arithmetic geodesic submanifold dichotomy}: \emph{For any $1 \le k \le n-1$, an arithmetic hyperbolic $n$-manifold either contains no codimension $k$ geodesic submanifolds, or it contains infinitely many and they are everywhere dense.}

\medskip

This observation is one of the motivations for the question answered by Theorem \ref{thm:main} and was perhaps first made precise in dimension $3$ by Maclachlan--Reid and Reid \cite{MRTG, ReidTG}, who also exhibited the first hyperbolic $3$-manifolds with no totally geodesic surfaces. Note that an analogous statement holds for any arithmetic locally symmetric space. See \cite{FLMS} for further discussion and examples.

Our proof of Theorem \ref{thm:main} rests on two key points:
\begin{enumerate}
\item From certain homomorphisms $\rho: \Gamma \rightarrow H$, we construct a good measure on a fiber bundle over $G/\Gamma$ that is invariant under a proper noncompact connected simple subgroup $W<G$. This is accomplished in \S\ref{section:measuresandarithmeticity}.
\item A superrigidity theorem showing that the measure constructed in $(1)$ allows us to extend $\rho$, provided that $H$ satisfies an additional {\em compatibility} condition. This is proved in \S\ref{section:proofs}.
\end{enumerate}

\noindent In the standard language of superrigidity and its proofs, one can view $(1)$ as the analogue for constructing a boundary map and $(2)$ as the analogue for using the boundary map to show that the representation $\rho$ extends.

We now discuss each of these steps briefly and begin by stating a version of Theorem \ref{thm:main} in language from homogeneous dynamics. We consider a proper noncompact connected closed simple subgroup $W<G=\SO_0(n,1)$. Then $W$ is isomorphic to $\SO_0(m,1)$ for some $1< m < n$. We have a $W$-action on $G/\Gamma$, and results of Ratner classify the $W$-invariant ergodic measures for this action \cite{RatnerDuke}. We say a measure $\nu$ on $G/\Gamma$ has {\em proper support} if its support is a proper closed subset.

\begin{thm}
\label{thm:main:homogeneous}
If there exists an infinite sequence $\{\mu_i\}$ of $W$-invariant, ergodic measures with proper support for which Haar measure on $G/\Gamma$ is a weak-$*$ limit of the $\mu_i$, then $\Gamma$ is arithmetic.
\end{thm}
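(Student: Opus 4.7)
The plan is to follow the two-step strategy outlined in the introduction for Theorem \ref{thm:main}, treating Theorem \ref{thm:main:homogeneous} as essentially a dynamical reformulation. By Ratner's theorem, each $\mu_i$ is the homogeneous probability measure on a closed orbit $Wg_i\Gam \subset G/\Gam$, whose stabilizer $W \cap g_i\Gam g_i^{-1}$ is a lattice in $W$; via the standard dictionary, such an orbit corresponds to a properly immersed, finite-volume, totally geodesic submanifold of dimension $m \ge 2$ in $\Gam\backslash\mathbb H^n$. Thus the hypothesis of Theorem \ref{thm:main:homogeneous} already produces infinitely many such submanifolds, and after passing to maximal ones one is in the setting of Theorem \ref{thm:main}. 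The weak-$*$ convergence to Haar is, from this viewpoint, additional dynamical information that facilitates (but is not strictly needed to invoke) arithmeticity via Theorem \ref{thm:main}.

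To prove arithmeticity directly from the dynamical input, I would follow the Margulis scheme. Fix a local field $k$ and a Galois embedding $\sigma$ of the adjoint trace field of $\Gam$ into $k$, yielding a representation $\rho : \Gam \to \mathbf H(k)$ into a simple $k$-algebraic group. By Vinberg's criterion, arithmeticity follows once one shows that every such $\rho$ either extends continuously to $G$ or has precompact image. I would establish the desired extension via the two-step scheme announced in the excerpt.

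In step (1), I would form the associated bundle $E = (G \times \mathbf H(k))/\Gam$ over $G/\Gam$, with $\Gam$ acting diagonally by right translation on $G$ and via $\rho$ on $\mathbf H(k)$. For each closed $W$-orbit carrying $\mu_i$, the restriction $\rho|_{\Gam_i}$ to the rank-one lattice $\Gam_i = W \cap g_i\Gam g_i^{-1}$ produces, via Furstenberg--Margulis boundary theory, a $\Gam_i$-equivariant measurable object which integrates to a $W$-invariant probability measure $\nu_i$ on $E$ projecting to $\mu_i$. Since $\mu_i \to \mathrm{Haar}$ weakly on $G/\Gam$, a subsequential weak-$*$ limit $\nu_\infty$ of the $\nu_i$ is a $W$-invariant probability measure on $E$ projecting to Haar on the base. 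A Mozes--Shah-type argument, applied in $E$ rather than in $G/\Gam$, would then be used to upgrade $\nu_\infty$ to a measure satisfying the compatibility hypothesis of the superrigidity theorem of Section \ref{section:proofs}. Step (2) is that superrigidity theorem: it takes the compatible $W$-invariant measure on $E$ as input and outputs a continuous extension of $\rho$ to $G$, completing the argument.

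The principal obstacle is the upgrade in step (1): mere $W$-invariance of $\nu_\infty$ is not enough to feed into superrigidity, and the convergence $\mu_i \to \mathrm{Haar}$ must be leveraged to produce genuine additional structure on $\nu_\infty$. This requires careful control over the Furstenberg boundary maps as $i$ varies, a Mozes--Shah classification in the bundle setting $E$, and an understanding of how $G$-translations interact with the supports of the $\nu_i$, which together form the technical core of the argument.
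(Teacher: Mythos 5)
Your high-level strategy matches the paper: construct a $W$-invariant measure on a bundle over $G/\Gamma$, feed it into the superrigidity theorem, and run the Margulis scheme. But the execution of step (1) as you describe it has genuine gaps that would block the argument.

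First, you work with the bundle $E=(G\times\mathbf H(k))/\Gamma$, whose fiber $\mathbf H(k)$ is noncompact. Weak-$*$ limits of probability measures on a noncompact fiber can escape to infinity, so the limit $\nu_\infty$ you invoke need not be a probability measure projecting to Haar, nor even nonzero. The superrigidity theorem in the paper is moreover stated for $(G\times\mathbb P(V))/\Gamma$ for a faithful irreducible $\mathbf H$-representation $V$, and the compactness of $\mathbb P(V)$ is exactly what makes the weak-$*$ limit argument painless. The paper's construction of the measure is also much more concrete than ``Furstenberg--Margulis boundary theory'': since $\Gamma_i$ is Zariski dense in a proper $\ell$-subgroup $\mathbf L_i$ of $\mathbf G$, its image $\rho(\Gamma_i)$ is contained in a proper $k$-algebraic subgroup, whose Lie algebra determines a fixed line $l_i$ in some irreducible summand $V$ of $\bigwedge^m\mathfrak h$; this fixed point in $\mathbb P(V)$ gives a $\Gamma_i$-invariant section over the closed $W$-orbit, and $\nu_i$ is simply the push-forward of $\mu_i$ by that section. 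No Mozes--Shah argument in the bundle is needed once the fiber is compact and the section is chosen this way — one just takes an ergodic component of any weak-$*$ limit, and the projection to Haar is automatic from $\mu_i\to\mathrm{Haar}$ on the base.

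Second, you conflate two different uses of the word ``compatibility.'' The only property of $\nu$ required by the superrigidity theorem is that it projects to Haar, which is automatic; the ``compatibility'' in the theorem's hypotheses is a group-theoretic condition on the pair $(k,\mathbf H)$ concerning homomorphisms from $P$ to $N_{\mathbf H}(\mathbf J)/\mathbf J(k)$, and verifying it (Lemma~\ref{lemma:compatible}) is a nontrivial rank and Dynkin-diagram argument that your proposal omits entirely. Finally, you also silently assume $\mathbf H$ is $k$-simple; for $G=\SO_0(3,1)$ one has $\PO(4,k)\cong\PGL_2(k)\times\PGL_2(k)$ over certain completions, and the paper has to do extra work (choosing a faithful summand of $\bigwedge^3$ of the adjoint representation) precisely to handle that case. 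Your opening paragraph's appeal to Theorem~\ref{thm:main} is also circular in the paper's logical structure, since the paper derives Theorem~\ref{thm:main} from Theorem~\ref{thm:main:homogeneous} via Proposition~\ref{prop:homogen}.
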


We show in Proposition \ref{prop:homogen} that Theorem \ref{thm:main:homogeneous} implies Theorem \ref{thm:main}. 

\medskip

In proving arithmeticity we are given a local field $k$ of characteristic zero, a connected semisimple adjoint $k$-algebraic group $\bfH$ with $k$-points $\bfH(k)$, and a representation $\rho: \Gam \rightarrow \bfH(k)$. We consider a certain irreducible representation of $\bfH(k)$ on a finite dimensional $k$-vector space $V$ and the associated projective space $\bbP(V)$. We then use the hypotheses of either Theorem \ref{thm:main} or Theorem \ref{thm:main:homogeneous} to build a $W$-invariant ergodic measure on the bundle $(G\times\bbP(V))/\Gam$ that projects to Haar measure on $G/\Gamma$.

We now state the superrigidity theorem that finishes the proof from the existence of such a measure. This requires an additional technical assumption on the pair $k$ and $\bfH$. Let $P$ be a minimal parabolic subgroup of $G$ and $U$ its unipotent radical. A pair consisting of a local field $k$ and a $k$-algebraic group $\bfH$ is said to be \emph{compatible} with $G$ if for every nontrivial $k$-subgroup $\bfJ<\bH$ and any continuous homomorphism $\tau:P \rightarrow N_{\bfH}(\bfJ)/\bfJ(k)$, where $N_{\bfH}(\bfJ)$ is the normalizer of $\bfJ$ in $\bfH$, we have that the Zariski closure of $\tau(U')$ coincides with the Zariski closure of $\tau(U)$ for every nontrivial subgroup $U'<U$ (see \S\ref{ssec:Compatible}).

\begin{thm}
\label{theorem:superrigiditydichotomy}
Let $G$ be $\SO_0(n, 1)$ for $n \ge 3$, $W<G$ be a noncompact simple subgroup, and $\Gamma < G$ be a lattice. Suppose that $k$ is a local field and $\bH$ is a connected $k$-algebraic group such that the pair consisting of $k$ and $\bfH$ is compatible with $G$. Finally, let $\rho: \Gamma \rightarrow \bH(k)$ be a homomorphism with unbounded, Zariski dense image. If there exist a $k$-rational faithful irreducible representation $\bfH \to \SL(V)$ on a $k$-vector space $V$ and a $W$-invariant measure $\nu$ on $(G\times \mathbb{P}(V))/\Gamma$ that projects to Haar measure on $G / \Gam$, then $\rho$ extends to a continuous homomorphism from $G$ to $\bH(k)$.
\end{thm}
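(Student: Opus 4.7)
The plan is to distill from $\nu$ a $\Gamma$-equivariant algebraic boundary map and then exploit the compatibility hypothesis to promote it to a continuous extension of $\rho$. After passing to an ergodic component we may assume $\nu$ is $W$-ergodic; since Haar measure on $G/\Gamma$ is itself $W$-ergodic (because $W$ is noncompact in the rank-one group $G$), the projection of $\nu$ to $G/\Gamma$ remains Haar. Disintegrating $\nu$ along the fibration $(G\times\bbP(V))/\Gamma\to G/\Gamma$ and lifting to $G$ yields a measurable map $\Phi\co G\to\text{Prob}(\bbP(V))$ satisfying $\Phi(g\gamma)=\rho(\gamma)^{-1}_{*}\Phi(g)$ and $\Phi(wg)=\Phi(g)$ for all $\gamma\in\Gamma$ and $w\in W$; equivalently, $\Phi$ descends to a $\Gamma$-equivariant measurable map $\bar\Phi\co W\bs G\to\text{Prob}(\bbP(V))$.

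The second step is to replace $\bar\Phi$ by a map into a $k$-algebraic $\bfH$-variety of the form $\bfH(k)/\bfJ(k)$. Because $\rho(\Gamma)$ is unbounded and Zariski dense in $\bfH(k)$ and the representation $\bfH\to\SL(V)$ is faithful and irreducible, a Furstenberg--Zimmer type analysis of the $\bfH(k)$-stabilizers of the fibre measures $\Phi(g)$ forces them almost surely into a proper $k$-algebraic subgroup. An algebraic-hull argument then produces a $\Gamma$-equivariant, $W$-invariant measurable map $\phi\co G\to\bfH(k)/\bfJ(k)$ for some proper $k$-subgroup $\bfJ<\bfH$.

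The third and hardest step is to convert $\phi$ into a continuous algebraic homomorphism. The right $P$-action on $W\bs G$ commutes with the left $W$-action, so when combined with the right-$\Gamma$-equivariance of $\phi$ one may interpret $\phi$ as a measurable $\Gamma$-cocycle into the variety $\bfH(k)/\bfJ(k)$. A Mackey--Zimmer style reduction, using the ergodic behaviour of $P$ and its unipotent subgroups on $G/\Gamma$, yields a measurable candidate homomorphism $\tau\co P\to N_{\bfH}(\bfJ)/\bfJ(k)$. The compatibility hypothesis now enters decisively: it stipulates that for every continuous such $\tau$, the Zariski closure of the image $\tau(U')$ of any nontrivial subgroup $U'<U$ coincides with that of $\tau(U)$, rigidifying the unipotent image. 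This rigidity is precisely what is needed to upgrade the measurable datum to a continuous homomorphism on $P$, which one then extends (using $\Gamma$'s Zariski density in $G$ together with the maximality of $P$ as a closed subgroup of $G$ in the rank-one setting, so that $G = P \cup PwP$) to a continuous homomorphism $G\to\bfH(k)$ that restricts to $\rho$ on $\Gamma$.

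The main obstacle is this third step. Passing from a measurable equivariant map on $W\bs G$ to a continuous homomorphism on $G$ is delicate because $W$ and $P$ do not jointly act transitively on $G$ --- indeed $\dim W+\dim P<\dim G+\dim(W\cap P)$ --- so the measurable data on $W\bs G$ does not straightforwardly determine a $P$-equivariant object, and the standard machinery for upgrading measurable cocycles to continuous homomorphisms requires nontrivial rigidity inputs on the target. The compatibility condition is designed precisely for this difficulty: by enforcing the unipotent image to be rigid under every potential target subquotient of $\bfH$, it eliminates the obstructions that would otherwise prevent the measurable reduction from being algebraic, and thereby delivers the desired continuous extension of $\rho$.
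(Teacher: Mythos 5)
Your outline gets the first two steps right and they match the paper: disintegrate $\nu$ to get a $W$-invariant, $\Gamma$-equivariant map $\tilde\phi\colon G\to\mathcal P(\bbP(V))$, then use Zimmer's Furstenberg-lemma machinery together with metric ergodicity of the $\Gamma$-action on $W\backslash G$ (which rules out the stabilizer being compact, using that $\rho$ is unbounded) to land in a single orbit $\bH(k)/\tilde L$ and pass to $\phi\colon G\to\bH/\bfL(k)$ with $\bfL\lneq\bH$ proper. Where your argument has a real gap is in the third step, and the gap is twofold.

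First, your phrase ``Mackey--Zimmer style reduction yields a measurable candidate homomorphism $\tau\colon P\to N_\bfH(\bfJ)/\bfJ(k)$'' is doing all the work and none of the work. The difficulty you correctly flag --- that $W$-invariance of $\phi$ does not by itself give a $P$-equivariant object, since $W$ and $P$ do not jointly act transitively --- is not resolved by a generic appeal to Mackey--Zimmer. The paper's solution is specific: it invokes the Bader--Furman category of $T$-algebraic representations of $G$, the existence of a \emph{coset} initial object $\Psi\colon G\to\bH/\bfJ(k)$ for $T=U'=U\cap W$ when the $T$-action on $G/\Gamma$ is weakly mixing, and a lemma (Lemma~\ref{lem:initobj}) asserting that the initial objects for $T$ and for the iterated normalizers of $T$ have the same underlying map. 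Because $U'\trianglelefteq U\trianglelefteq P$, the $U'$-initial object automatically becomes a $P$-algebraic representation with a continuous $\tau\colon P\to N_\bH(\bfJ)/\bfJ(k)$. Without this normalizer bootstrap (or an explicit substitute), there is no route from the $U'$-equivariant datum to a $P$-equivariant one, and your sketch does not supply one.

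Second, you misplace what the compatibility hypothesis accomplishes. It is \emph{not} used to upgrade a measurable homomorphism to a continuous one --- continuity of $\tau$ already comes packaged with the Bader--Furman initial-object theorem. Compatibility is used to show that the stabilizer $\bfJ$ in the initial object is \emph{trivial}: if $\bfJ\neq 1$, compatibility forces $\overline{\tau(U')}=\overline{\tau(U)}$, so the $U'$-invariance of $\phi$ (which holds since $U'<W$) propagates through the factorization via $\Psi$ to give $U$-invariance of $\phi$; combined with $W$-invariance and $\langle U,W\rangle=G$ this makes $\phi$ essentially constant and hence produces a $\rho(\Gamma)$-fixed point in $\bH/\bfL(k)$, contradicting Zariski density of $\rho(\Gamma)$ in the adjoint group $\bH$. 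Only after $\bfJ=1$ is established does one have an honest $(P\times\Gamma)$-equivariant map $\Psi\colon G\to\bH(k)$ and an honest homomorphism $\tau\colon P\to\bH(k)$; the final extension to $G$ is obtained by running the normalizer argument once more with $T=A$, observing $N_G(A)$ contains a Weyl element so $\langle P,N_G(A)\rangle=G$, and invoking the gluing results of Bader--Furman. Your concluding appeal to $G=P\cup PwP$ is morally in the right direction but does not by itself explain how the two pieces of equivariance on $P$ and on $N_G(A)$ are reconciled into a single homomorphism that restricts to $\rho$ on $\Gamma$.
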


\begin{rem} \label{rem:SU}
We state the theorem for $G=\SO_0(n,1)$ for simplicity, but the same theorem holds, with practically the same proof, for every connected simple $\bbR$-rank one Lie group. In particular, there is an analogue of Theorem \ref{theorem:superrigiditydichotomy} for lattices in $\SU(n,1)$. 
\end{rem}

Understanding invariant measures for dynamical systems that are not homogeneous plays an important role in other recent results in rigidity theory. For example, see work of Brown, Hurtado, and the second author on Zimmer's conjecture \cite{BrownFisherHurtado,BrownFisherHurtado2}. In that context, Theorem \ref{theorem:superrigiditydichotomy} can be thought of as classifying invariant measures in a nonhomogeneous setting. Indeed, Theorem \ref{theorem:superrigiditydichotomy} shows that either there is no extension of $\rho$ and hence no such $W$-invariant measures exist, or there is a simple classification of all invariant measures on the projective bundle.

We note in closing that Theorem \ref{theorem:superrigiditydichotomy} can be reformulated in several equivalent ways. There is also an analogous \emph{superrigidity for cocycles} that follows from the same proof, and which provides some partial technical results towards questions raised by results of Zimmer and Bader--Furman--Sauer \cite{ZimmerCSR, BFS}.

\subsubsection*{Acknowledgments}
Bader was supported in part by the ISF grant 704/08.
Fisher was partially supported by NSF Grant DMS-1607041, the Institute for Advanced Study, and the Simons Collaboration on Algorithms and Geometry. Miller was partially supported from U.S. National Science Foundation grants DMS 1107452, 1107263, 1107367 ``RNMS: GEometric structures And Representation varieties'' (the GEAR Network). Stover was partially supported by Grant Number 523197 from the Simons Foundation/SFARI.

Our approach to Theorem \ref{thm:main} owes a tremendous debt to the ideas of Gregory Margulis on superrigidity and arithmeticity \cite{MargulisICM, MargulisSuperrigidity, MargulisBook}. The authors thank Ian Agol, Matt Bainbridge, Alex Eskin, Michael Larsen, Homin Lee, Alan Reid, and Dave Witte Morris for helpful conversations. They particularly thank Hee Oh for detailed comments on an earlier draft, Jean-Fran\c cois Lafont for his participation in the early phases of this project, and Alex Furman for his inspiring work on superrigidity with the first author.

\section{Fixed Notation} \label{sec:FN}

We first fix some notation that will be used throughout our paper. Let $\bfG_0$ denote $\SO(n,1)$ for $n\geq 3$, considered as a real algebraic group. We let $G$ be the connected component of the identity in $\bfG_0(\bbR)$, that is, $G= \SO_0(n,1)$. Set $K=\SO_0(n)<G$ and identify $K\backslash G$ with hyperbolic $n$-space. For a noncompact simple subgroup $W<G$, fix a maximal $\bbR$-split torus $A < W$. Since $W$ and $G$ are both $\bbR$-rank one, $A$ is also a maximal $\bbR$-split torus of $G$. Fix a maximal unipotent subgroup $U$ of $G$ normalized by $A$ and let $M$ be the compact factor of the Levi decomposition of the connected component of the identity in the centralizer of $A$. Then $P = M A U$ is the \emph{Langlands decomposition} of the maximal parabolic subgroup of $G$ associated with the pair $(A, U)$. Set $U^\prime = W \cap U$, and note that it is a maximal unipotent subgroup of $W$.

Now, fix a lattice $\Gam < G$. When considering the action of $\Gam$ on $G$, we always consider the right action, $g \cdot \gam = g \gam^{-1}$ and $X_\Gamma=K\backslash G/\Gamma$ denotes the corresponding locally symmetric space. Let $\ell$ be the trace field of $\Gamma$, that is the subfield of $\bbR$ generated by all elements of the form $\Tr(\Ad(\gamma))$ for $\gamma \in \Gamma$, where $\Ad$ denotes the adjoint representation. Denote the inclusion of $\ell$ in $\bbR$ by $w:\ell\to \bbR$. By work of Vinberg \cite{VinbergDef}, there exists an $\ell$-group $\bfG$ and an $\bbR$-isogeny $\bfG\to \bfG_0$ such that the image of $\bfG(\ell)$ in $\bfG_0(\bbR)$ contains a finite index subgroup of $\Gamma$. Passing to this finite index subgroup, we will assume throughout that $\Gamma$ is contained in the image of $\bfG(\ell)$. By \cite{VinbergDef}, $\ell$ is the minimal field of definition of $\Gam$. Moreover, it follows from work of Selberg, Calabi, Raghunathan, and Garland \cite{Selberg, Calabi, RaghunathanRigid, Garland} that $\ell$ is in fact a number field.

\section{Finding invariant measures and arithmeticity}
\label{section:measuresandarithmeticity}

In this section we show how Theorem \ref{theorem:superrigiditydichotomy} implies Theorem \ref{thm:main}. 
We show in \S\ref{ssec:Orbits} that the hypotheses of Theorem~\ref{thm:main:homogeneous} are implied by the hypotheses of Theorem~\ref{thm:main},
\S\ref{ssec:Deduce} recalls the overall strategy of deducing arithmeticity from superrigidity, \S\ref{Ssec:buildingmeasures} finds the measure $\nu$ assumed in Theorem \ref{theorem:superrigiditydichotomy} using the hypotheses of Theorem \ref{thm:main:homogeneous}, and finally \S\ref{ssec:Compatible} shows that all the target groups considered for proving arithmeticity are compatible. In particular, this section reduces Theorem \ref{thm:main} to Theorem \ref{thm:main:homogeneous} and Theorem \ref{thm:main:homogeneous} to Theorem \ref{theorem:superrigiditydichotomy}.

\subsection{Geodesic submanifolds and properly supported measures}\label{ssec:Orbits}

Recall that a finite measure $\mu$ on $G/\Gamma$ is called \emph{homogeneous} if there is a closed subgroup $S<G$ such that $\mu$ is Haar measure on a closed $S$-orbit in $G/\Gamma$. Such a homogeneous measure is said to be $W$-ergodic when $W$ is a closed subgroup of $S$ under which $\mu$ is ergodic. 
In this case, the support of the measure is said to be a \emph{$W$-ergodic homogeneous subspace} of $G/\Gamma$.
For $1< m \le n$ we let $W_m\le G$ be the standard embedding of $\SO_0(m,1)$ into $G$. The entirety of this subsection is devoted to proving the following proposition.

\begin{prop}
\label{prop:homogen}
For the real hyperbolic space $X_\Gamma = K \backslash G /\Gamma$ the following are equivalent:
\begin{enumerate}
  \item $X_\Gamma$ contains infinitely many maximal totally geodesic subspaces of dimension two or higher,
  \item for some $1<m<n$, there exists an infinite sequence $\{\mu_i\}$ of $W_m$-invariant, ergodic measures with proper support for which Haar measure on $G/\Gamma$ is a weak-$*$ limit of the $\mu_i$,
  \item for some $1<m<n$, there exists an infinite sequence of homogeneous, $W_m$-ergodic measures $\{\mu_i\}$ for which Haar measure on $G/\Gamma$ is a weak-$*$ limit of the $\mu_i$.
\end{enumerate}
\end{prop}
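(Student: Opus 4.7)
My plan is to prove the cycle of implications (1) $\Rightarrow$ (3) $\Rightarrow$ (2) $\Rightarrow$ (1), relying on three main tools. First, Ratner's measure classification, which forces every $W_m$-invariant ergodic probability measure on $G/\Gam$ to be homogeneous since the simple noncompact group $W_m$ is generated by unipotents. Second, the Mozes--Shah theorem together with Dani--Margulis nondivergence: a weak-$*$ convergent sequence of homogeneous $W_m$-ergodic probability measures $\mu_i \to \mu_\infty$ loses no mass, the limit $\mu_\infty$ is again homogeneous and $W_m$-invariant (hence supported on a closed $S$-orbit with $W_m \subseteq S$), and if $\mu_\infty$ is not Haar then for $i$ large $\supp \mu_i$ lies in $\supp \mu_\infty$. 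Third, the structure theory for closed connected subgroups $S$ of the real-rank-one group $G = \SO_0(n,1)$ containing the noncompact simple $W_m$: up to conjugation each such $S$ has the form $W_k \cdot C$ with $m \le k \le n$ and $C$ compact in the centralizer of $W_k$, and the projection to $X_\Gam$ of a closed $S$-orbit is a closed totally geodesic subspace of dimension $k$.

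For (1) $\Rightarrow$ (3), I first pass to an infinite subsequence of maximal totally geodesic subspaces $\{Y_i\}$ of common dimension $m \in \{2, \dots, n-1\}$. Each $Y_i$ lifts to a closed orbit of a conjugate of $W_m$ in $G/\Gam$, and after adjusting the basepoint I realize each lift as a closed $W_m$-orbit in $G/\Gam$ of finite volume. The canonical $W_m$-invariant probability measure $\mu_i$ on this orbit is automatically $W_m$-ergodic, since $W_m$ acts transitively on the orbit. Passing to a weak-$*$ convergent subsequence $\mu_i \to \mu_\infty$ (still a probability measure by nondivergence), Mozes--Shah gives that $\mu_\infty$ is homogeneous, supported on a closed orbit $Sx_\infty$ with $W_m \subseteq S \subseteq G$. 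If $S \ne G$, the subgroup classification realizes $Z := \pi(Sx_\infty)$ as a proper totally geodesic subspace of $X_\Gam$ of dimension $k \in [m, n-1]$, and $Y_i \subseteq Z$ for all sufficiently large $i$; but maximality of $Y_i$ forces $Y_i = Z$, contradicting distinctness. Hence $S = G$, $\mu_\infty$ is Haar, and (3) holds.

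The implication (3) $\Rightarrow$ (2) is immediate: each $\mu_i$ in (3) is already $W_m$-invariant and $W_m$-ergodic, and the only homogeneous probability measure on $G/\Gam$ with full support is Haar (an $S$-invariant probability measure on a transitive $S$-space is unique), so at most one of the distinct $\mu_i$ is Haar and removing it yields a sequence with proper support. For (2) $\Rightarrow$ (1), Ratner's theorem promotes each $\mu_i$ to a homogeneous measure supported on a closed $S_i$-orbit with $W_m \subseteq S_i$; properness of support forces $S_i \subsetneq G$, and the subgroup classification identifies $Z_i := \pi(\supp \mu_i)$ as a closed totally geodesic subspace of $X_\Gam$ of dimension in $[m, n-1]$. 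Since any ascending chain of proper totally geodesic subspaces terminates in at most $n - 1$ steps, each $Z_i$ lies in some maximal proper totally geodesic subspace $Y_i$ of $X_\Gam$. If the collection of these $Y_i$ were finite, pigeonhole would yield a subsequence with $Y_i \equiv Y_0$, whence every $\supp \mu_i$ in this subsequence would lie in the proper closed set $\pi^{-1}(Y_0)$ and so would its weak-$*$ limit, contradicting equality with Haar. Thus infinitely many maximal proper totally geodesic subspaces of dimension at least $m \ge 2$ exist.

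The step I expect to require the most care is the classification of closed connected subgroups of $\SO_0(n,1)$ containing $W_m$ together with the identification of their closed finite-volume orbits on $G/\Gam$ with closed totally geodesic subspaces of $X_\Gam$. This combines the real-rank-one structure of $G$ (forcing any intermediate semisimple subgroup to be, up to compact factors, a standard $W_k$) with the constraints of Ratner's theorem (limiting the intermediate subgroups that support closed finite-volume orbits). A secondary but necessary point is uniform nondivergence for $\{\mu_i\}$, ensuring weak-$*$ limits remain probability measures; this follows from Dani--Margulis nondivergence applied to the one-parameter unipotent subgroups of $W_m$.
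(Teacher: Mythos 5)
Your overall strategy---combining Ratner, Dani--Margulis nondivergence, Mozes--Shah, and the subgroup structure of $\SO_0(n,1)$ above $W_m$---closely mirrors the paper's, and the $(3)\Rightarrow(2)$ and $(2)\Rightarrow(1)$ steps are essentially sound. The gap is in $(1)\Rightarrow(3)$. You state Mozes--Shah as giving ``if $\mu_\infty$ is not Haar then for $i$ large $\supp\mu_i$ lies in $\supp\mu_\infty$,'' and from this you conclude $Y_i\subseteq Z$ and derive a contradiction with maximality and distinctness. This is a misquotation: the actual content of Mozes--Shah (packaged in the paper as Theorem 3.4) is that there exist $g_i\to e$ in $G$ and an $i_0$ with $\supp\mu_i\subseteq g_i\supp\mu_\infty$ for $i\ge i_0$. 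What you actually obtain is $Y_i\subseteq\pi\bigl(g_i\supp\mu_\infty\bigr)$, and these translated totally geodesic subspaces could a priori all be distinct, so maximality of the $Y_i$ gives no contradiction with their distinctness as written.

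Closing this gap requires extra work that the paper carries out carefully. Using Lemma 3.3(2), the stabilizer $S$ of $\mu_\infty$ and its conjugates $S^{g_i}$ all contain $W_m$ and have $W_m$ as the subgroup generated by unipotent elements; since $W_m^{g_i}\le S^{g_i}$ and the unipotently generated subgroup of $N_m$ is exactly $W_m$, one concludes $W_m^{g_i}=W_m$, i.e., $g_i\in N_m$. Only after this does one get $N_m g_i h\Gamma/\Gamma = N_m h\Gamma/\Gamma$ independent of $i$, so that $\pi\bigl(g_i\supp\mu_\infty\bigr)$ is the \emph{same} proper closed totally geodesic subspace for every $i$, and the maximality argument goes through. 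You correctly flagged the subgroup classification as the delicate step, but you should also have flagged the translation $g_i$ inherent in Mozes--Shah: your argument breaks exactly there, and removing the $g_i$ requires the $g_i\in N_m$ observation or an equivalent local-rigidity statement for closed totally geodesic subspaces.
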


That $(3)$ implies $(2)$ is clear and the reverse implication is a theorem of Ratner \cite{RatnerOrbitClosure} (see also Einsiedler \cite{Einsiedler}). It therefore suffices to show that $(1)$ and $(3)$ are equivalent. Throughout this section we let $\pi:G/\Gamma\to X_\Gamma$ be the natural projection.

We start by clarifying the relationship between totally geodesic subspaces and homogeneous measures. We first recall that a subspace $Z$ of a hyperbolic $n$-orbifold $X$ is totally geodesic if it is properly immersed and if one (hence any) lift to a map of orbifold universal covers $\widetilde{Z} \to \widetilde{X}$ is a totally geodesic isometric embedding of hyperbolic $m$-space in hyperbolic $n$-space for some $m \le n$. In particular, totally geodesic subspaces are by definition connected.

\begin{lemma} \label{lem:clarifying}
Fix $1< m \le n$ and $X_\Gamma=K\backslash G/\Gamma$. Then the following hold.
\begin{enumerate}

\item 
Let $S\le G$ be a closed subgroup containing $W_m$ and $h \in G$ be such that $Sh\Gamma/\Gamma \subset G/\Gamma$ is a closed $S$-orbit. 
Then the subspace $Z=\pi(Sh\Gamma/\Gamma)$ of $X_\Gamma$ is a closed totally geodesic $m^\prime$-dimensional subspace for some $m^\prime \geq m$ and, up to normalization, the $m^\prime$-volume of $Z$ is the push-forward of the corresponding homogeneous measure on $G / \Gamma$. \label{lem:clarifying1}

\item Furthermore, under the assumption above, $m^\prime=m$ if and only if all unipotent elements of $S$ are contained in $W_m$. In this case $S$ is a subgroup of the normalizer $N_m$ of $W_m$ in $G$, and $N_mh\Gamma/\Gamma\subset G/\Gamma$ is also closed with projection $\pi(N_mh\Gamma/\Gamma)=Z$. \label{lem:clarifying2}

\item Conversely, every $m$-dimensional closed totally geodesic subspace $Z$ in $X_\Gamma$ has finite $m$-volume and moreover $Z=\pi(Sh\Gamma/\Gamma)$ for some closed intermediate subgroup $W_m \le S \le N_m$ and some homogeneous, $W_m$-ergodic subspace $Sh\Gamma/\Gamma \subset G/\Gamma$. \label{lem:clarifying3}

\end{enumerate}
\end{lemma}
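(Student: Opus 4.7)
The plan is to work in the universal cover $\bbH^n=K\backslash G$ with basepoint $p_0$ fixed by $K$, so that $W_mp_0$ is the standard totally geodesic $\bbH^m\subset\bbH^n$, and to translate between closed $S$-orbits in $G/\Gamma$ and totally geodesic submanifolds of $X_\Gamma$ via $\pi$. The key structural ingredient I would establish first is a classification of connected closed subgroups of $G=\SO_0(n,1)$ containing $W_m$ for $m\ge 2$: since $W_m$ is simple and noncompact, any such subgroup is reductive (a nonsolvable Lie subalgebra cannot sit in the solvable Levi radical of a parabolic of a rank-one group), and the adjoint $\mathfrak{w}_m$-action on $\mathfrak{so}(n,1)$ constrains the invariant complements tightly enough that every such connected subgroup takes the form $W_{m^\prime}\cdot C$ for some $m\le m^\prime\le n$ and some compact subgroup $C$ of the centralizer of $W_{m^\prime}$ in $G$.

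For part (1), the $S$-orbit $Shp_0$ in $\bbH^n$ then equals the $hW_{m^\prime}h\inv$-orbit of $hp_0$ (since $C$ stabilizes $hp_0$ up to a compact rotation), which is the $h$-translate of a standard $\bbH^{m^\prime}$. Hence $Z=\pi(Sh\Gamma/\Gamma)$ is an $m^\prime$-dimensional totally geodesic subspace of $X_\Gamma$ with $m^\prime\ge m$, closed because $K$ is compact, and the pushforward of the $S$-invariant homogeneous measure factors as Haar on $W_{m^\prime}$ projecting to Riemannian $m^\prime$-volume on $\bbH^{m^\prime}$ times Haar on the compact $C$, giving up to normalization the $m^\prime$-volume on $Z$. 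For part (2), $m^\prime=m$ exactly when $W_{m^\prime}=W_m$, and since all unipotents of $S=W_{m^\prime}\cdot C$ lie in $W_{m^\prime}$ (as $C$ has no nontrivial unipotents), the unipotent hypothesis of (2) is equivalent to this equality; then $S\le N_m$ and compactness of $N_m/S$ makes $N_mh\Gamma/\Gamma$ closed with the same image under $\pi$.

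For part (3), I lift $Z$ to $\widetilde Z\cong\bbH^m\subset\bbH^n$ and pick $h\in G$ with $\widetilde Z=hW_mp_0$, so that the $G$-stabilizer of $\widetilde Z$ is $hN_mh\inv$ and $\pi\inv(Z)=KN_mh\Gamma/\Gamma$ is closed; by compactness of $K$ this forces $N_mh\Gamma/\Gamma$ itself to be a closed $N_m$-orbit. Ratner's orbit-closure theorem applied to $W_m$ on $G/\Gamma$ then produces a closed $S$-orbit $Sh\Gamma/\Gamma$ with $W_m\le S\le N_m$ as the closure of $W_mh\Gamma$ inside this $N_m$-orbit, carrying a unique $W_m$-ergodic invariant probability measure (ergodicity being the Mautner/Moore phenomenon for $U^\prime=W_m\cap U\subset W_m$). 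Finiteness of the $m$-volume of $Z$ is equivalent to $h\inv\Gamma h\cap W_m$ being a lattice in $W_m$, which I would deduce from properness of the immersion together with topological closedness of $Z$ via a Margulis-lemma argument applied to the thin part of $X_\Gamma$; this last step is the main obstacle I expect, since extracting finite volume from the bare hypothesis ``properly immersed and topologically closed'' requires ruling out that the geodesic lift wanders into cusps with infinite volume, best handled by combining the classical rank-one thin-thick decomposition with the observation that a properly immersed submanifold has intrinsic thin parts covered by only finitely many extrinsic thin parts.
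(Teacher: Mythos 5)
Your overall strategy is essentially the paper's: establish the structure of closed subgroups of $G$ containing $W_m$, translate closed $S$-orbits into totally geodesic subspaces and their volume measures for parts (1)--(2), and for part (3) lift $Z$, produce a closed $N_m$-orbit over it, and invoke Ratner to extract a $W_m$-ergodic homogeneous component. Two places deserve comment.

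First, your part (3) argument has a genuine gap in the step ``$\pi^{-1}(Z)=KN_mh\Gamma/\Gamma$ is closed; by compactness of $K$ this forces $N_mh\Gamma/\Gamma$ itself to be a closed $N_m$-orbit.'' Compactness of $K$ does not license this inference: if $K$ is a compact group acting on a space and $E$ is a subset with $KE$ closed, $E$ need not be closed (e.g.\ $K=\SO(2)$ acting on $\bbR^2$ and $E$ the union of the closed segment from $(0,0)$ to $(1,0)$ with the half-open segment from $(0,0)$ to $(0,1)$; then $KE$ is the closed unit disk but $E$ is not closed), and $N_mh\Gamma/\Gamma$ is not $K$-saturated. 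The paper's proof closes this gap with a different idea: it identifies $G/\Gamma$ with the oriented orthonormal frame bundle $F(X_\Gamma)$ and $N_mh\Gamma/\Gamma$ with the sub-bundle $F_{X_\Gamma}(Z)$ of frames whose first $m$ vectors are tangent to $Z$. Tangency is a closed condition, so $F_{X_\Gamma}(Z)$ is closed in $F(X_\Gamma)$ as soon as $Z$ is a topologically closed, properly immersed totally geodesic subspace, and this is where closedness of the $N_m$-orbit actually comes from. Your sketch as written does not supply a correct reason for closedness.

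Second, two of your other steps are replaced in the paper by cleaner or shorter devices, though these are not errors. You propose proving that every connected closed $S\supset W_m$ has the form $W_{m'}\cdot C$ with $C$ compact in the centralizer; the paper avoids the full classification and instead passes to $S^+$, the closed normal subgroup generated by the unipotents of $S$, observes it is connected semisimple containing $W_m$, hence conjugate to some $W_{m'}$ by an element of the centralizer $C_m<K$, and works with that. Your phrase ``constrains the invariant complements tightly enough'' is doing real work that would need to be spelled out, whereas $S^+$ sidesteps it. Finally, you are right that finiteness of the $m$-volume of $Z$ is a substantive point; the paper does not reprove it via a thin--thick/Margulis-lemma analysis but cites Gelander--Levit, so your plan to prove it directly is more work than necessary, but it is a correct thing to want to verify.
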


\begin{proof}
We start by observing that for each $m^\prime$, the image of $W_{m^\prime}$ in $K\backslash G$, namely $K\backslash KW_{m^\prime}$, is an $m^\prime$-dimensional closed totally geodesic subspace. As $G$ acts transitively on the collection of $m^\prime$-dimensional closed totally geodesic subspaces of $K\backslash G$, any such subspace is of the form $K\backslash KW_{m^\prime}g$ for some $g\in G$. Since $N_{m^\prime}$ is contained in $KW_{m^\prime}$, every intermediate subgroup $W_{m^\prime} \le S_1 \le N_{m^\prime}$ has the property that $K\backslash KS_1g=K\backslash KW_{m^\prime}g$ is an $m^\prime$-dimensional closed totally geodesic subspace of $K\backslash G$, and the push-forward of the volume form on the Lie group $S_1$ is its $m^\prime$-volume. Conversely, $N_{m^\prime}$ is exactly the stabilizer of $K\backslash KW_{m^\prime}$ in $G$, thus if $K\backslash KS_1g$ is an $m^\prime$-dimensional closed totally geodesic subspace of $K\backslash G$ then $W_{m^\prime} \le S_1 \le N_{m^\prime}$.

We now prove part \ref{lem:clarifying1}. Let $S \le G$ be a closed subgroup that contains $W_m$ and let $h\Gamma/\Gamma \in G/\Gamma$ be a point whose $S$-orbit is closed. Denote by $S^+ \le S$ the closed normal subgroup generated by unipotent elements in $S$. Then $S^+$ is a connected semisimple subgroup of $G$ which contains $W_m$, hence it is conjugate to $W_{m^\prime}$ for some $m^\prime\ge m$. In fact, if $C_m$ denotes the centralizer of $W_m$ in $G$, it is straightforward to see that there exists $g\in C_m$ such that $W_{m^\prime}=(S^+)^g=gS^+g^{-1}$. We fix such a $g$ and set $S_1=S^{g}$.

Since $S^+$ is normal in $S$, $W_{m^\prime}$ is normal in $S_1$, and thus $W_{m^\prime} \le S_1 \le N_{m^\prime}$. From the fact that $ C_m < K$ and $g\in C_m$, we get that
\[
KS=Kg^{-1}S_1g=KS_1g.
\]
Since the projection $\pi$ is proper, $Z=\pi(Sh\Gamma/\Gamma)$ is closed,
and since
\[
Z=K\backslash KSh\Gamma/\Gamma = K\backslash KS_1gh\Gamma/\Gamma \subset X_\Gamma,
\]
we conclude that $Z$ is $m^\prime$-dimensional and totally geodesic, since it is the image of $K\backslash KS_1gh$ under the projection $K\backslash G\to X_\Gamma$. Consequently, the $m^\prime$-volume on $Z$ is the push forward of the $S_1$-volume form. As the $S_1$-volume form is the $g^{-1}$-conjugate of the $S$-volume form, the $m^\prime$-volume of $Z$ is the push forward of Haar measure on $Sh\Gamma/\Gamma$. This completes the proof of part \ref{lem:clarifying1}.

We now prove part \ref{lem:clarifying2}. If all unipotent elements of $S$ are contained in $W_m$ then $S^+ \le W_m$, but $W_m \le S^+$ by hypothesis, so we conclude that $W_m=S^+$ and $m^\prime=m$. Conversely, if $m^\prime=m$ then since $W_m \le S^+ = W_{m^\prime}^g$ we conclude that $S^+=W_m$ and all unipotent elements of $S$ are contained in $W_m$. In this case, $W_m$ is normal in $S$ and thus $W_m \le S \le N_m$. Since $W_m$ is cocompact in $N_m$, $S$ is as well, and therefore the fact that $Sh\Gamma/\Gamma$ is closed in $G/\Gamma$ implies the same holds for $N_mh\Gamma/\Gamma$.
Moreover, from the chain of equations
\[
K\backslash KW_mh=K\backslash KSh=K\backslash KN_mh,
\]
we conclude that $\pi(N_m h\Gamma/\Gamma)=Z$ in $X_\Gamma$. This proves part \ref{lem:clarifying2}.

Before turning to part \ref{lem:clarifying3}, we discuss $m$-dimensional closed totally geodesic immersed submanifolds of $n$-dimensional Riemannian manifolds in general. Given such a pair $M\subset N$ we let $F(N)$ be the oriented orthonormal frame bundle of $N$ and we let $F_N(M)$ be the subbundle where the fiber over each point $x\in M$ is the subset of frames in $F(N)_x$ whose first $m$ vectors are tangent to $Z$. We note that $F_N(M)$ is a principal $\mathrm{S}(\mathrm{O}(m)\times \mathrm{O}(n-m))$-bundle over $M$ and it is closed in $F(N)$, which itself is a principal $\SO(n)$-bundle over $N$. This construction is natural under covering maps.

Identifying $G$ with $F(K\backslash G)$, one checks easily that $N_m$ gets identified with $F_{K\backslash G}(K\backslash KW_m)$ and thus for every $g\in G$, $N_mg$ gets identified with $F_{K\backslash G}(K\backslash KW_mg)$. In accordance with the identification of $G$ with $F(K\backslash G)$ we identify $G/\Gamma$ with $F(X_\Gamma)$. For an $m$-dimensional closed totally geodesic subspace $Z \subset X_\Gamma$ the subbundle $F_{X_\Gamma}(Z)$ gets identified with a closed subset of $G/\Gamma$. Finding $g\in G$ such that $K\backslash KW_mg\subset K\backslash G$ projects to $Z$ under the natural map $K\backslash G\to X_\Gamma$, we conclude by the naturality under covering maps that $F_{X_\Gamma}(Z)$ is identified with $N_mg\Gamma/\Gamma$. In particular, the latter is a closed subset of $G/\Gamma$ whose image under $\pi$ is $Z$.

We are now in a position to prove part \ref{lem:clarifying3}. Let $Z \subset X_\Gamma$ be an $m$-dimensional closed totally geodesic subspace. The fact that it has a finite measure is well-known; see \cite[Prop.~3.4]{Gelander-Levit} for a recent reference. By the discussion above there exists $g\in G$ such that $N_mg\Gamma/\Gamma$ is a closed subset of $G/\Gamma$ whose image under $\pi$ is $Z$. Note that $N_mg\Gamma/\Gamma$ has a finite volume, as it is a compact extension of $Z$. Since $W_m \le N_m$, $N_m g\Gamma/\Gamma$ is $W_m$-invariant, though it might not be $W_m$-ergodic. Fix a $W_m$-ergodic measure $\mu$ in its ergodic decomposition and let $S$ be the stabilizer of $\mu$ in $N_m$. Clearly, $W_m \le S \le N_m$ and $\mu$ is $S$-homogeneous by Ratner's Theorem. 
Let $h\in G$ be such that $\mu$ is the Haar measure of the $S$-orbit $Sh\Gamma/\Gamma$.
It follows from part 1 of the lemma that $\pi(Sh\Gamma/\Gamma)$ is a closed totally geodesic subspace of $X_\Gamma$ of dimension $m$ or higher. Since this image is contained in the $m$-dimensional closed totally geodesic subspace $Z$ it must coincide with it. This completes the proof of the lemma.
\end{proof}

We will use the following theorem which follows from combining the existence of compact cores for hyperbolic manifolds with works of Dani--Margulis
\cite[Thm.~6.1]{DaniMargulis} and Mozes--Shah \cite{MS}.

\begin{thm} \label{thm:MS}
Suppose that $W< G$ is a closed connected semisimple subgroup that is generated by unipotent elements. Let $\{\mu_i\}$ be a sequence of homogeneous, $W$-ergodic probability measures on $G/\Gamma$ that weak-$*$ converges in the space of all finite Radon measures to a measure $\mu$. Then $\mu$ is a homogeneous, $W$-ergodic probability measure on $G/\Gamma$ and there exists a sequence $\{g_i\}$ in $G$ and a natural number $i_0$ such that, for each $i\geq i_0$, the measure $g_i\mu$ is a  homogeneous, $W$-ergodic probability measure on $G/\Gamma$ whose support contains the support of $\mu_i$.
\end{thm}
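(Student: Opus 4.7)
The plan is to assemble three standard inputs: Dani--Margulis nondivergence of unipotent flows, the description of cusps of $G/\Gamma$ via compact cores, and the Mozes--Shah linearization theorem. The first two together ensure no mass escapes in the weak-$*$ limit; the third identifies the limiting measure and supplies the support-containment clause.

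The first step is to verify that $\mu$ is a probability measure rather than a subprobability. Since $W$ is generated by unipotent elements, each $\mu_i$ is invariant under a nontrivial one-parameter unipotent subgroup of $W$. Dani--Margulis nondivergence \cite[Thm.~6.1]{DaniMargulis}, applied uniformly to this one-parameter family, produces for each $\varepsilon>0$ a compact set $K_\varepsilon \subset G/\Gamma$ with $\mu_i(K_\varepsilon) \geq 1-\varepsilon$ for all $i$. The compact-core description of the cuspidal ends of $X_\Gamma = K\backslash G/\Gamma$ enters precisely to produce the geometric data (height functions on cusp neighborhoods) needed to make the nondivergence estimate uniform in $i$. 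Passing to the weak-$*$ limit and sending $\varepsilon \to 0$ yields $\mu(G/\Gamma)=1$.

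The second step is to invoke the Mozes--Shah theorem \cite{MS}. Once escape of mass is ruled out, their result asserts that the set of algebraic probability measures on $G/\Gamma$ that are homogeneous and ergodic under a subgroup containing a conjugate of a given group generated by unipotents is closed in the weak-$*$ topology. Hence $\mu$ itself is the Haar measure of a closed $S$-orbit $Sx_0$ for some closed subgroup $S\leq G$ with $W \leq S$, and is $W$-ergodic.

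The third and most delicate step is the support-containment conclusion. The deeper output of Mozes--Shah is a linearization statement: lifting the $\mu_i$ into the variety of closed connected subgroups of $G$ via a Chevalley-type embedding into a Grassmannian, and combining transverse convergence with Ratner's measure classification, one obtains a sequence $g_i \to e$ in $G$ such that, for $i$ sufficiently large, $g_i\mu$ is a homogeneous, $W$-ergodic probability measure with $\supp(\mu_i) \subseteq \supp(g_i\mu)$. The main obstacle is ensuring exactly this ``thickening'' behavior: a priori the orbits $\supp(\mu_i)$ might accumulate on a neighborhood of $\supp(\mu)$ without being contained in any translate of it. Resolving this is the technical heart of the Mozes--Shah argument, and depends crucially on the nondivergence established in the first step; once it is granted, the passage from ``$g_i\mu \to \mu$ and both are algebraic'' to the stated containment is direct.
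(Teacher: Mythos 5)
Your overall skeleton (Dani--Margulis nondivergence followed by Mozes--Shah, then the linearization/thickening for the support statement) matches the paper's proof, but Step~1 as you describe it has a genuine gap that the paper fills with a specific geometric input you do not supply.

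Dani--Margulis nondivergence \cite[Thm.~6.1]{DaniMargulis} is uniform over starting points $y$ lying in a fixed compact set $F$: given $F$ and $\varepsilon$, it produces a compact $F'$ such that the time average of $\chi_{F'}$ along the $u_t$-orbit of any $y\in F$ is at least $1-\varepsilon$. It gives no information if the orbit never meets $F$. A priori the closed $W$-orbits carrying the $\mu_i$ could retreat deeper and deeper into cusp neighborhoods as $i\to\infty$, and appealing to ``height functions on cusp neighborhoods'' is not a substitute for actually ruling this out; it is merely restating what one would like to be true. What the paper uses at this exact point is a compactness fact special to the geometric setting: by Lemma~\ref{lem:clarifying}\eqref{lem:clarifying1} each $\pi_*\mu_i$ is (proportional to) the volume of a closed totally geodesic subspace of $X_\Gamma$ of dimension at least $2$, and by \cite[Lem.~5.13]{FLMS} there is a single compact $C_1\subset X_\Gamma$ meeting every such subspace. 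Taking $F=\pi^{-1}(C_2)$ for a slight thickening $C_2\supset C_1$ then forces $\mu_i(F)>0$ for every $i$; Howe--Moore gives $\{u_t\}$-ergodicity of each $\mu_i$, so Birkhoff yields a $\mu_i$-generic point in $F$, and only then does Dani--Margulis produce the uniform lower bound $\mu_i(F')\ge 1/2$. Without the FLMS compact-core lemma, or some equally concrete replacement, the uniformity you assert across all $i$ is unjustified. (Note also that the paper only needs $\mu\neq 0$; that $\mu$ is a probability measure then comes as part of the output of \cite[Cor.~1.3]{MS}.)

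Your Steps~2 and~3 are on the right track but are largely black-box glosses. For the ergodicity statement, the paper explicitly passes through $S$ (the subgroup of the stabilizer of $\mu$ generated by unipotents), notes $S$ is semisimple since it contains $W$, and then invokes Howe--Moore to downgrade $S$-ergodicity to $W$-ergodicity. For the support-containment, the paper does not merely cite Mozes--Shah; it constructs the translates $g_i$ explicitly by choosing $y\in\supp(\mu)$, finding points $y_i'\to y$ in $\supp(\mu_i)$ that are simultaneously generic for a finite generating set of one-parameter unipotents $U_1,\dots,U_k<W$ (again via Howe--Moore), setting $g_iy=y_i'$, and then applying \cite[Thm.~1.1]{MS} separately for each $U_j$ to get both $Y_i\subset g_iY$ and $U_j^{g_i^{-1}}$-invariance of $\mu$ for $i$ large, whence $W^{g_i^{-1}}\le S$ and $g_i\mu$ is $W$-ergodic. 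You should either carry out this construction or at least acknowledge that the $g_i$ and the $W$-ergodicity of $g_i\mu$ require an argument beyond quoting the Mozes--Shah theorem statement.
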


\begin{proof}
We first claim that $\mu$ is not the zero measure. This is trivial if $G/\Gamma$ is compact. As this claim is conjugation invariant, we will assume as we may that $W=W_m$ for some $m>1$. Using \cite[Lem.~5.13]{FLMS} we fix a compact set $C_1$ in $X_\Gamma=K\backslash G/\Gamma$ that meets every closed totally geodesic subspace of dimension at least $2$.
Then, choose a compact set $C_2$ that contains $C_1$ in its interior and consider its preimage in $G / \Gam$, i.e., the compact subset $F=\pi^{-1}(C_2)$ of $G/\Gamma$. Fix a 1-parameter unipotent subgroup $\{u_t\}$ in $ W$ and set $\epsilon=1/2$. Applying \cite[Thm.~6.1]{DaniMargulis} we find a compact subset $F^\prime\subset G/\Gamma$ such that
\begin{equation} \label{eq:DM}
\frac{1}{T}\int_0^T \chi_{F^\prime}(u_ty) dt=
\frac{1}{T}\lambda\left\{ t\in [0,T] \mid u_t y\in F^\prime\right\} \geq \frac{1}{2},
\end{equation}
for every $y\in F$ and every $T\ge 0$, where $\chi_{F^\prime}$ is the characteristic function of $F^\prime$ and $\lambda$ is the Lebesgue measure on $\mathbb{R}$. The claim will follow once we show that $\mu_i(F^\prime) \geq 1/2$ for every $i$, thus $\mu(F^\prime)\geq 1/2$. We now fix $i$ and show that indeed $\mu_i(F^\prime) \geq 1/2$.

By Lemma~\ref{lem:clarifying}\eqref{lem:clarifying1}, $\pi_*\mu_i$ is the unit renormalization of the volume measure associated with a closed totally geodesic subspace of dimension at least $m\ge 2$. This subspace intersects $C_1$ nontrivially, by the choice of $C_1$, thus it intersects $C_2$ in an open set. It follows that $\mu_i(F)=\pi_*\mu_i(C_2)>0$, as $\pi_*\mu_i$ is proportional to a volume measure. We note that $\mu_i$ is $\{u_t\}$-ergodic by the Howe--Moore theorem, and we let $y\in F$ be a $\{u_t\}$-generic point with respect to $\mu_i$. Applying the Birkhoff ergodic theorem to the function $\chi_{F'}$ we conclude by Equation~\eqref{eq:DM} that indeed $\mu_i(F') \geq 1/2$. This proves the claim.

Using \cite[Cor.~1.3]{MS} we therefore conclude that $\mu$ is a homogeneous, $S$-ergodic probability measure on $G/\Gamma$, where $S$ is the subgroup of the stabilizer of $\mu$ generated by unipotent elements. The group $S$ is not unipotent, as it contains $W$, therefore it must be semisimple. We conclude by the Howe--Moore theorem that $\mu$ is $W$-ergodic.

Let $Y_i=\supp(\mu_i)$ and $Y=\supp(\mu)$ in $G/\Gamma$ and fix one dimensional unipotent subgroups $U_1,\ldots,U_k<W$ that generate $W$. Note that $\mu_i$ is $U_j$-ergodic for every $i$ and every $1\leq j\leq k$ by the Howe--Moore theorem. Thus, for any fixed $i$, the subset $Y^\prime_i$ consisting of the points in $Y_i$ that are $U_j$-generic for every $1\le j\le k$ is of full $\mu_i$-measure, hence it is dense in $Y_i$. We fix a point $y\in Y$. As $\mu=\lim \mu_i$, we can find a sequence of points $\{y_i\}$ converging to $y$ such that $y_i$ is in $Y_i$ for every $i$. By deforming such a sequence, using the fact that $Y^\prime_i$ is dense in $Y_i$, we find and fix a sequence $\{y^\prime_i\}$ converging to $y$ such that $y^\prime_i$ is in $Y^\prime_i$ for every $i$. 
We fix a sequence $\{g_i\}$ in $G$ such that $\lim g_i=e$ and $g_iy=y^\prime_i$ for all $i$.

By \cite[Thm.~1.1]{MS}, we conclude that for every $1\le j \le k$ there exists an $i_j$ such that $Y_i\subset g_iY$ and $\mu$ is $U_j^{g_i^{-1}}$-invariant for every $i\geq i_j$. Let $i_0=\max \{i_j\}$ and fix $i\geq i_0$. As $g_iY=\supp(g_i\mu)$, we conclude that $\supp(\mu_i)\subseteq\supp(g_i\mu)$. It remains to show that $g_i\mu$ is $W$-ergodic or, equivalently, that $\mu$ is $W^{g_i^{-1}}$-ergodic. Since $\mu$ is $U_j^{g_i^{-1}}$-invariant for every $1\le j\le k$, it is $W^{g_i^{-1}}$-invariant. It follows that $W^{g_i^{-1}}$ is contained in $S$. Then $W^{g_i^{-1}}$-ergodicity follows from the Howe--Moore theorem and $S$-ergodicity. This completes the proof.
\end{proof}

\begin{proof}[Proof of Proposition \ref{prop:homogen}]
As noted immediately after the statement of the proposition, it is enough to show that (1) and (3) are equivalent. We begin with the easier implication, namely that (3) implies (1). Fix $1<m<n$, and suppose that $\{\mu_i\}$ is a sequence of homogeneous, $W_m$-ergodic measures for which Haar measure on $G/\Gamma$ is a weak-$*$ limit. Let $\bar{\mu}_i=\pi_*\mu_i$ be the push-forward measures. By Lemma~\ref{lem:clarifying}\eqref{lem:clarifying1}, each measure $\bar{\mu}_i$ is supported on a closed totally geodesic subspace of $X_\Gamma$ and we let $Z_i$ be a maximal totally geodesic subspace of $X_\Gamma$ containing it. Since Haar measure of $G/\Gamma$ is by hypothesis a weak-$*$ limit of the sequence $\{\mu_i\}$, its push-forward is supported on the closure of $\bigcup Z_i$. Since the push forward of Haar measure on $G/\Gamma$ is the volume form on $X_\Gamma$, it follows that $\bigcup Z_i$ is dense in $X_\Gamma$ and hence the sequence $\{Z_i\}$ consists of infinitely many maximal totally geodesic subspaces. This implies (1).

Next we show that (1) implies (3). Assume that there exists an infinite sequence $\{Z_i\}$ of distinct closed maximal totally geodesic subspaces of $X_\Gamma$. By passing to a subsequence we assume that they all have the same dimension $m$ for some $1<m<n$. By Lemma~\ref{lem:clarifying}\eqref{lem:clarifying3}, each $Z_i$ is the image under $\pi : G / \Gam \to X_\Gam$ of a homogeneous, $W_m$-invariant subspace of $G/\Gamma$ which we denote by $Y_i$. Furthermore each $Y_i$ is the support of a homogeneous, $W_m$-ergodic probability measure $\mu_i$ with stabilizer $S_i$ containing $W_m$. Passing to a further subsequence, we can assume that the $\mu_i$ weak-$*$ converge to a measure $\mu$. Note that Theorem~\ref{thm:MS} implies that $\mu$ is a homogeneous, $W_m$-ergodic probability measure on $G/\Gamma$. We now want to show that $\mu$ is Haar measure on $G/\Gamma$.

Assume for contradiction that $\mu$ is not Haar measure on $G/\Gamma$. 
If $S$ denotes the stabilizer of $\mu$, which contains $W_m$, then $Y_\infty=\supp(\mu)=Sh\Gamma/\Gamma$ for some $h\in G$. 
By Theorem~\ref{thm:MS} there exists a sequence $\{g_i\}$ in $G$ and a natural number $i_0$ such that for each $i\geq i_0$, $g_iY_\infty$ is a homogeneous, $W_m$-invariant subspace of $G/\Gamma$ that contains $Y_i$.
Once again passing to a subsequence we assume that this holds for every $i \ge 1$.

Fix any $i\in\N$. Applying Lemma~\ref{lem:clarifying}\eqref{lem:clarifying1} to $g_iY_\infty$, we see that $\pi(g_iY_\infty)$ is a proper closed totally geodesic subspace, where properness follows from our assumption that $\mu$ is not Haar measure along with the special case $m^\prime=n$ in Lemma~\ref{lem:clarifying}\eqref{lem:clarifying1}. Since $Y_i\subseteq g_iY_\infty$, we deduce that $Z_i\subseteq \pi(g_iY_\infty)$ and hence $Z_i= \pi(g_iY_\infty)$ by maximality.

Note that $g_iY_\infty$ is an $S^{g_i}$-homogeneous space and hence $W_m \le S^{g_i}$. By Lemma~\ref{lem:clarifying}\eqref{lem:clarifying2} we conclude that the subgroup of $S^{g_i}$ generated by unipotent elements is $W_m$ and that $W_m\le S^{g_i} \le N_m$. Since $W_m \le S$, we also have that $W_m^{g_i} \le S^{g_i}$ and since the subgroup of $N_m$ generated by unipotents is exactly $W_m$, we see that $W_m=W_m^{g_i}$. Therefore $g_i\in N_m$.
Applying Lemma~\ref{lem:clarifying}\eqref{lem:clarifying1} to the closed $S^{g_i}$-orbit
\[
S^{g_i}g_ih\Gamma/\Gamma=g_iSh\Gamma/\Gamma=g_i Y_\infty,
\]
we see that the $N_m$-orbit $N_mg_ih\Gamma/\Gamma$ is also closed in $G/\Gamma$ and that we have $Z_i=\pi(N_mg_ih\Gamma/\Gamma)$. However $g_i\in N_m$, thus $N_mg_ih\Gamma/\Gamma=N_mh\Gamma/\Gamma$ is independent of $i$. We conclude that $Z_i=\pi(N_mh\Gamma/\Gamma)$ is independent of $i$, contradicting the assumption that the spaces $Z_i$ are all distinct. This contradiction concludes the proof that (1) implies (3).
\end{proof}

\subsection{The proofs of Theorem \ref{thm:main} and Theorem \ref{thm:main:homogeneous}}
\label{ssec:Deduce}

We now explain how to prove Theorem \ref{thm:main} and Theorem \ref{thm:main:homogeneous} given Theorem \ref{theorem:superrigiditydichotomy}. This closely follows Margulis's proof of arithmeticity from superrigidity. For more details see \cite[Ch.\ IX]{MargulisBook} or \cite[Ch.\ 6]{Zimmer}.

We are given a lattice $\Gam < G$ and want to show that it is arithmetic. As in \S\ref{sec:FN}, we consider $\Gamma$ as a subgroup of $\bfG(\ell)$, where $\ell$ is the adjoint trace field of $\Gam$, embedded in $\bbR$ via $w:\ell\to\bbR$. Consider the collection $S$ of all places of $\ell$, that is, the equivalence classes of dense embeddings of $\ell$ into a local field. For $v\in S$, $\ell_v$ will denote the corresponding local field. In particular we have the aforementioned $w\in S$ and $w:\ell\to \ell_w=\bbR$. Considering the various embeddings $\Gamma \to \bfG(\ell)\to \bfG(\ell_v)$ for all $v\in S$, it is standard that $\Gamma$ is arithmetic if and only if the image of $\Gam$ in $\bfG(\ell_v)$ is precompact for every $v\neq w$.

We let $\bfH$ be the adjoint group associated with $\bfG$ and claim that for $v\in S$, $v\neq w$, the corresponding homomorphism $\Gamma\to \bfG(\ell_v)\to \bfH(\ell_v)$ cannot be extended to $\bfG(\ell_w) \simeq G\to \bfH(\ell_v)$. By \cite[Rem.\ 1.8.2(III)]{MargulisBook}, such an extension gives rise to a continuous field embedding $\ell_w\to \ell_v$ and this field embedding clearly agrees with $v:\ell\to \ell_v$ on the set of elements of the form $\Tr(\Ad(\gamma))$ for $\gamma\in \Gamma$. As $\ell$ is generated by the above set, we get that $\ell_w\to \ell_v$ extends $v$, which contradicts the assumption that $v\neq w$. To be precise, in \cite[Rem.\ 1.8.2(III)]{MargulisBook} the target group is assumed absolutely simple, which is not necessarily the case for $\bfH$. This can be remedied by passing to a certain finite field extension $\ell'_v/\ell_v$, considering the corresponding homomorphism $\bfG(\ell_v)\to \bfH(\ell_v) \to \bfH(\ell'_v)$, taking the restriction of scalars of $\bfH$ from $\ell'_v$ back to $\ell_v$, then projecting to a simple factor. This procedure replaces the target group $\bfH$ with an absolutely simple $\ell_v$-group, thus proving our claim by the argument presented above. In summary, we prove that $\Gamma$ is arithmetic by showing that its image in $\bfG(\ell_v)$ is precompact, and we do that by proving that if this is not the case then $\Gamma\to \bfH(\ell_v)$ must extend to $G$. Note that the failure of precompactness of the image of $\Gamma$ in $\bfG(\ell_v)$ implies the same holds for the image of $\Gamma$ in $\bfH(\ell_v)$, as the map $\bfG\to \bfH$ is a finite isogeny.

The existence of the desired extension $G\to \bfH(\ell_v)$ will follow from Theorem~\ref{theorem:superrigiditydichotomy} once we verify its various assumptions in the specific settings of Proposition~\ref{prop:homogen}. In this setting, in \S\ref{Ssec:buildingmeasures} we will produce an $\ell_v$-vector space $V$, endowed with a faithful irreducible representation of $\bfH(\ell_v)$, and a $W$-invariant measure on $(G\times\bbP(V))/\Gamma$, as required in Theorem~\ref{theorem:superrigiditydichotomy}. Our proof will be complete once we show that the pair consisting of $\ell_v$ and $\bfH$ is compatible with $G$. This will be done in \S\ref{ssec:Compatible}.

\subsection{Lifting measures to the projective bundle}
\label{Ssec:buildingmeasures}

Let $\ell$ be the number field, $\bfG$ the $\ell$-algebraic group associated with $\Gam$ as in \S\ref{sec:FN}, and $\bfH$ be the corresponding adjoint $\ell$-group. In this section we let $k = \ell_v$ be any local completion of $\ell$ for which the natural inclusion $\rho^\prime:\Gam \to \bfG(\ell)\to \bfG(k)$ is not precompact. We also consider the corresponding representation $\rho:\Gam \to \bfG(k)\to \bfH(k)$ whose image is also not precompact. This section is then devoted to proving the following.

\begin{prop}
\label{prop:liftingmeasures}
Under the hypotheses of Theorem \ref{thm:main:homogeneous} or Theorem \ref{thm:main}, there is a $k$-rational faithful irreducible representation $\bfH \to \SL(V)$ on a $k$-vector space $V$ and a $W$-invariant measure on $(G\times\bbP(V))/\Gamma$ that projects to Haar measure on $G/\Gamma$.
\end{prop}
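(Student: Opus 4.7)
The plan is to lift each measure $\mu_i$ from the sequence produced by Theorem~\ref{thm:main:homogeneous} (so $W=W_m$ for some $1<m<n$) to a $W_m$-invariant probability measure $\tilde\mu_i$ on $(G\times\bbP(V))/\Gamma$, and then extract a weak-$*$ subsequential limit, whose projection to $G/\Gamma$ is forced by continuity to be Haar measure. For $V$ I would fix any faithful irreducible $k$-rational representation $\bfH\to\SL(V)$; such representations exist by standard highest-weight theory for semisimple $k$-algebraic groups.

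By Lemma~\ref{lem:clarifying}, each $\mu_i$ is Haar measure on a closed orbit $Y_i = S_i h_i\Gamma/\Gamma$ with $W_m \le S_i \le N_m$, and the pointwise stabilizer $\Lambda_i = \Gamma \cap h_i^{-1}S_i h_i$ is a lattice in $h_i^{-1}S_i h_i$. Because $\Lambda_i\subset\bfG(\ell)$, its Zariski closure $\bfL_i$ in $\bfG$ is an $\ell$-algebraic subgroup whose identity component is an $\ell$-form of $\SO(m,1)$; the $\ell$-morphism $\bfG\to\bfH$ that defines $\rho$ on $\Gamma$ carries $\bfL_i$ to an $\ell$-subgroup $\rho(\bfL_i)$ of $\bfH$. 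Given a $\rho(\Lambda_i)$-invariant probability measure $\theta_i$ on $\bbP(V)$, the product of Haar measure on $S_i h_i$ with $\theta_i$ descends along the diagonal $\Lambda_i$-action to a $W_m$-invariant probability measure $\tilde\mu_i$ on $(S_i h_i\times\bbP(V))/\Lambda_i$ which projects to $\mu_i$: well-definedness comes from the $\Lambda_i$-invariance of $\theta_i$, $W_m$-invariance from left-invariance of Haar on $S_i h_i$, and the projection property is automatic.

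Weak-$*$ compactness of the space of probability measures on the (separable, locally compact) bundle then yields a subsequential limit $\tilde\mu=\lim\tilde\mu_i$, which is $W_m$-invariant and projects to $\lim\mu_i$, namely Haar on $G/\Gamma$ by hypothesis. Setting $\nu=\tilde\mu$ concludes the construction, provided the $\theta_i$ can be produced for infinitely many $i$.

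The main obstacle is therefore the existence of the invariant measure $\theta_i$ for infinitely many $i$. The route I would take is to show that after passing to a subsequence one may assume $\bfL_i(\ell_v) = \bfL_i(k)$ is a compact subgroup of $\bfG(k)$, so that $\rho(\Lambda_i)$ lies inside the compact subgroup $\rho(\bfL_i(k))$ of $\bfH(k)$; one then obtains $\theta_i$ by pushing the Haar measure of this compact subgroup onto any $\rho(\Lambda_i)$-orbit in $\bbP(V)$. The compactness claim for infinitely many $i$ is where the number-theoretic content of the proposition enters: it requires a classification of the $\ell$-forms of rank-one orthogonal groups that can occur as $\bfL_i \hookrightarrow \bfG$, combined with the hypothesis that $\rho$ is non-precompact precisely at the place $v$, to force $\bfL_i$ to be anisotropic at $v$ along a subsequence. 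This step is considerably more delicate than the measure-theoretic scaffolding above and is where the arithmetic consequences of the hypothesis (infinitely many geodesic submanifolds) get converted into analytic data feeding the superrigidity theorem.
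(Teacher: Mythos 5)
Your overall scaffolding (lift the $\mu_i$ from Proposition~\ref{prop:homogen} to measures on $(G\times\bbP(V))/\Gamma$, take a weak-$*$ limit, and use that projection commutes with limits) matches the paper. The gap lies in how you produce the fiber measures $\theta_i$, and in particular in your plan to show that $\rho(\bfL_i(k))$ is compact. There is no reason for the $\ell$-forms $\bfL_i$ of $\SO(m,1)$ to be anisotropic at the place $v$ along any subsequence, and no classification of forms is going to give this: the $\bfL_i$ are isotropic at the place $w$ (they contain lattices in a rank-one Lie group), and a priori they may well be isotropic at $v$ too. The proposition does not require any such anisotropy, and if your route truly needed it the proposition would be false.

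The paper sidesteps this entirely by \emph{choosing} $V$ as a function of the data, so that each $\rho(\Gamma_i)$ has a genuine fixed point in $\bbP(V)$. Concretely, after passing to a subsequence where $m=\dim\bfL_i$ is constant, one looks at the line $l_i=\bigwedge^m\frakl_i\subset\bigwedge^m\frakh$ in the $m$-th exterior power of the adjoint representation. The stabilizer of $l_i$ is the normalizer of $\bfL_i$ in $\bfH$, which contains $\rho(\Gamma_i)$, so $l_i$ is a $\rho(\Gamma_i)$-fixed line; since this stabilizer is a proper subgroup, $l_i$ projects nontrivially to some nontrivial irreducible summand of $\wedge^m(\Ad)$, and finitely many summands occur, so a single irreducible $V$ works for infinitely many $i$. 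The Dirac mass at the image of $l_i$ in $\bbP(V)$ is then the desired $\rho(\Gamma_i)$-invariant measure, with no compactness needed; the corresponding lift of $\mu_i$ is just the pushforward under a measurable $W$-invariant section over the orbit $Y_i$. You should also note that this construction forces $V$ to depend on the sequence (so you cannot fix an arbitrary faithful irreducible $V$ at the outset), and that when $\bfH$ is not $k$-simple (the $n=3$ case, where $\bfH(k)\cong\PGL_2(k)\times\PGL_2(k)$) faithfulness of the chosen summand requires a separate argument, which the paper handles by an explicit computation with the diagonal copy of $\frak{sl}_2$ in $\frak{sl}_2\oplus\frak{sl}_2$.
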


\begin{proof}
We retain all notation from prior subsections.
We note that, as $\mathbf{H}$ is semisimple, each of its $k$-rational representation is into $\SL(V) < \GL(V)$. We will construct a faithful irreducible representation supporting a $W$ invariant measure as required. By Proposition \ref{prop:homogen} we can assume that we have $W$-invariant, $W$-homogeneous measures $\mu_i$ on $G/\Gamma$ converging to Haar measure. As in the proof of Proposition \ref{prop:homogen} we know that each $\mu_i$ is Haar measure on some $x_i(S_i/\Gamma_i)$, where $S_i \in \calS$, $x_i \in G$, and $\Gamma_i= S_i \cap \Gamma$ is Zariski dense in the closed subgroup $S_i$. Thus $S_i$ is an $\ell$-defined subgroup of $\bfG$. Since our representation into $\bfG(k)$ is defined by localization at a place of $\ell$, it follows that $\rho(\Gamma_i)$ is contained in a proper $k$-algebraic subgroup $\bfL_i(k)$ of $\bfH(k)$. We pass to a subsequence where $m=\dim(S_i)=\dim(\bfL_i)$ is constant.

We first assume that $\bfH$ $k$-is simple, which is automatic when $G\neq \SO_0(3,1)$, and note that in this case the faithfulness of a $k$-linear representation is equivalent to its nontriviality. We will consider the semisimple case at the end of the proof, where faithfulness will require an argument.

Consider the $m^{\textrm{th}}$ exterior power $\wedge^m(\Ad): \bfH(k) \rightarrow \GL(\bigwedge^m \mathfrak{h})$ of the adjoint representation of $\bfH(k)$ on its Lie algebra $\mathfrak{h}$. The Lie algebra $\mathfrak{l}_i$ of $\bfL_i(k)$ determines a line $l_i$ in $\bigwedge^m \mathfrak{h}$. Since the stabilizer of $l_i$ in $\bfH(k)$ is the normalizer of $\bfL_i(k)$ and hence a proper subgroup, this line is never $\bfH(k)$-invariant. Therefore each $l_i$ projects nontrivially to some nontrivial irreducible summand of $\wedge^m(\Ad): \bH(k) \rightarrow \GL(\wedge^m \mathfrak{h})$. Since only finitely many irreducible representations can occur, one such irreducible representation $V$ occurs infinitely often. Passing to a further subsequence, we obtain an irreducible subrepresentation $V$ onto which each $l_i$ projects nontrivially. The point stabilizer of $l_i$ contains the image of $\bfL_i(k)$ and hence it contains $\rho(\Gamma_i)$.

Given the closed $W$-invariant subset $x_i(S_i/\Gamma_i)$, note that $l_i$ determines an invariant line bundle over $x_i(S_i/\Gamma_i)$ and therefore defines a measurable section
\[
s^i: x_i(S_i/\Gamma_i) \rightarrow (G\times \mathbb{P}(V))/\Gamma_i.
\]
Let $\mu_i$ be Haar measure on $x_i(S_i/\Gamma_i)$ and define $\nu_i = s^i_*{\mu}_i$. We then construct a $W$-ergodic, $W$-invariant measure on $(G \times \mathbb{P}(V))/\Gamma$ by taking $\nu$ to be any ergodic component of any weak-$*$ limit of the $\nu_i$ on $(G \times \mathbb{P}(V))/\Gamma$. Since the $\mu_i$ converge to Haar measure on $G/\Gamma$ and projection commutes with taking weak-$*$ limits, this implies that $\nu$ projects to Haar measure on $G / \Gam$ and completes the proof when $\bfH$ is simple.

For $G=\SO_0(3,1)$, the group $\bfH(k)$ need not be $k$-simple due to exceptional isomorphisms such as $\PO(4,k)\cong \PGL(2,k) \times \PGL(2,k)$ when $k$ is $p$-adic or complex or $\PO(2,2) \cong \PO(2,1) \times \PO(2,1)$. We therefore must find an irreducible representation $V$ on which $\bfH(k)$ acts faithfully and for which the above construction yields the necessary invariant measure. To this end, we need to consider cases when
\[
\bH(k)=\PGL_2(k)\times \PGL_2(k),
\]
where $k$ is $\bbR$, $\bbC$, or a nonarchimedean local field of characteristic zero.

Notice that $\Gamma$ is Zariski dense in the almost simple group $\bfG(\mathbb{R})$ and the groups $\Gamma_i$ have a proper Zariski closure. In particular, $\Gamma_i<\Gamma$ is not normal, and it follows from injectivity of $\rho$ that $\rho(\Gamma_i)$ is not contained in a direct factor of $\bfH(k)$.
As the Zariski closure of $\Gamma_i$ is almost simple in $\bfG(\ell)$ and $\rho$ is given by a field embedding, we conclude that $\rho(\Gamma_i)$ is contained in a conjugate of the diagonal subgroup $\Delta(\PGL_2(k))$ of $\PGL_2(k) \times \PGL_2(k)$ for all $i$.

We take the adjoint representation of $\PGL_2(k) \times \PGL_2(k)$ on $k^6$ and the diagonal three dimensional subspace $\Delta(k^3) < k^3 \oplus k^3$ stabilized by $\Delta(\PSL_2(k))$. A computation shows that $\bigwedge^3(k^3 \oplus k^3)$ splits as a direct sum of four irreducible representations of $\PGL_2(k) \times \PGL_2(k)$, two that are trivial and two that are isomorphic to the faithful representation $V(3,3)$ on $k^3 \otimes k^3$. One also checks that $\bigwedge^3(\Delta(k^3))$ projects nontrivially to each $V(3,3)$ (in fact, to all four summands). Taking $V=V(3,3)$ and arguing as above, we also produce a $W$-invariant measure on $(G\times \mathbb{P}(V))/\Gamma$ when $G=\SO_0(3,1)$. This completes the proof.
\end{proof}

\subsection{Compatibility}
\label{ssec:Compatible}

Let $G$, $U$, and $P = MAU$ be as defined in \S\ref{sec:FN}. Let $k$ be a local field and $\bH$ a $k$-algebraic group. Recall that the pair consisting of $k$ and $\bH$ is \emph{compatible} with $G$ if for every nontrivial $k$-subgroup $\bfJ<\bH$ and any continuous homomorphism $\tau:P \rightarrow N_{\bfH}(\bfJ)/\bfJ(k)$, where $N_{\bfH}(\bfJ)$ is the normalizer of $\bfJ$ in $\bfH$, we have that the Zariski closure of $\tau(U')$ coincides with the Zariski closure of $\tau(U)$ for every nontrivial subgroup $U'<U$.

Note that if the pair $(k',\bfH)$ is compatible, where $k'/k$ is a finite field extension, then the pair $(k,\bfH)$ is also compatible. Indeed, letting $\bfJ<\bH$ be a $k$-subgroup and $\tau:P \rightarrow N_{\bfH}(\bfJ)/\bfJ(k)$ be a continuous homomorphism, composing $\tau$ with the homomorphism $N_{\bfH}(\bfJ)/\bfJ(k) \rightarrow N_{\bfH}(\bfJ)/\bfJ(k')$ defines a continuous homomorphism $\tau':P \rightarrow N_{\bfH}(\bfJ)/\bfJ(k')$. Compatibility of $(k',\bfH)$ implies that the Zariski closure of $\tau(U')$ coincides with the Zariski closure of $\tau(U)$ for every nontrivial subgroup $U'<U$.

We note also that compatibility of $(k,\bfH)$ follows immediately if $U<\ker \tau$ for every $\tau$ as above. This is automatically the case when $k$ is nonarchimedean, since then the group $N_{\bfH}(\bfJ)/\bfJ(k)$ is totally disconnected while $U$ is connected.

\begin{lemma}
\label{lemma:compatible}
We retain the setting of \S\ref{sec:FN} and let $\bfH$ be the adjoint group of $\bfG(k)$ for $k$ a local field and $\ell \to k$ a field embedding. Then the pair consisting of $k$ and $\bfH$ is compatible with $G$.
\end{lemma}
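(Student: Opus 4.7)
The plan is to treat the nonarchimedean and archimedean cases separately, using the reductions already spelled out in the text. For $k$ nonarchimedean the argument is the one already given: $N_\bfH(\bfJ)(k)/\bfJ(k)$ is totally disconnected while $U$ is connected, so $U \subseteq \ker \tau$ for every continuous $\tau$, whence $\tau(U') = \tau(U) = \{e\}$ for every $U'$. For the archimedean case the finite-extension observation above (applied with $k' = \bbC \supseteq \bbR = k$, which is a finite extension) reduces the problem to proving compatibility of the pair $(\bbC, \bfH)$. So for what follows I assume $k = \bbC$, fix a nontrivial $\bbC$-subgroup $\bfJ < \bfH$, and fix a continuous homomorphism $\tau: P \to N_\bfH(\bfJ)(\bbC)/\bfJ(\bbC)$. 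The aim is to show that $\tau|_U$ is trivial, which immediately yields compatibility.

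First I would analyze $\ker(\tau|_U)$. It is a closed $MA$-invariant subgroup of $U = \bbR^{n-1}$, since $U$ is normal in $P$. For $n \geq 4$ the group $M = \SO(n-1)$ acts on $\mathrm{Lie}(U)$ via the irreducible standard real representation, so the Lie algebra of $\ker(\tau|_U)$ is either $\{0\}$ or the whole of $\mathrm{Lie}(U)$. In the second case $\ker(\tau|_U) = U$ by connectedness and $\tau|_U$ is trivial, as desired. In the first case $\ker(\tau|_U)$ is a discrete $M$-invariant subgroup of $\bbR^{n-1}$; since the $M$-orbits of nonzero vectors are positive-dimensional spheres, this kernel must be $\{0\}$, and $\tau|_U$ is an injective immersion.

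The main obstacle is ruling out the injective scenario. In that scenario $d\tau(\mathrm{Lie}(U))$ would sit inside $\mathrm{Lie}(N_\bfH(\bfJ)/\bfJ)(\bbC)$ as a real $(n-1)$-dimensional subspace on which $\tau(M)$ acts by the standard $\SO(n-1)$-representation and $\tau(A)$ by the positive scalar $e^{t\alpha}$. Taking the $\bbC$-span produces an $M_\bbC A_\bbC$-subrepresentation of the quotient Lie algebra isomorphic to the $\alpha$-root space of $\mathfrak{h}(\bbC) = \mathfrak{so}(n+1,\bbC)$. Since $\mathrm{Lie}(N_\bfH(\bfJ)/\bfJ)(\bbC)$ is a subquotient of $\mathfrak{h}(\bbC)$ and the root-$\alpha$ eigenspace of $A_\bbC$ in $\mathfrak{h}(\bbC)$ is exactly the complexified unipotent radical $\mathfrak{g}_\alpha(\bbC)$, I would pin down the possible positions of such a subrepresentation inside the subquotient and show that, together with the requirement that $\bfJ$ be nontrivial, this leads to a contradiction. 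This representation-theoretic bookkeeping inside $\mathfrak{h}(\bbC)$ is the technical heart of the argument.

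For the low-dimensional edge case $n = 3$, where $M = \SO(2)$ is abelian, the dichotomy above still produces two possibilities, but in the injective case the $\bbC$-span of $d\tau(\mathrm{Lie}(U))$ is forced by the $M$-equivariance to be a single complex line. Consequently $\overline{\tau(U)}^Z$ is a $1$-dimensional algebraic subgroup of the target, in which every nontrivial continuous $\bbR$-subgroup is Zariski dense because every infinite subset of a one-dimensional $\bbC$-algebraic group is Zariski dense. Compatibility then holds directly in this case as well, completing the proof.
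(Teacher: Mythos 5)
Your reductions to $k=\bbC$ and the dichotomy $\ker(\tau|_U)\in\{\{0\},U\}$ coincide with the paper's opening moves, but neither of the two remaining cases is actually handled, and the $n=3$ step contains a genuine error.

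\textbf{The $n\ge 4$ case is not proved.} You say you would ``pin down the possible positions'' of the $\bbC$-span inside a subquotient of $\frakh(\bbC)$ and that ``this representation-theoretic bookkeeping \dots is the technical heart of the argument'' --- but that sentence is a placeholder, not an argument, and the bookkeeping is exactly where the work lies. Moreover, as phrased the comparison is ill-posed: $\tau$ lands in the \emph{quotient} $N_\bfH(\bfJ)/\bfJ(\bbC)$, so there is no canonical copy of $d\tau(\mathrm{Lie}(U))$ sitting inside $\frakh(\bbC)$, and talk of ``the root-$\alpha$ eigenspace of $A_\bbC$ in $\frakh(\bbC)$'' does not directly apply. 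The paper resolves this by passing to the \emph{preimages} $\wt\bfU,\wt\bfP<N_\bfH(\bfJ)<\bfH$, running a rank inequality
$\rank\bfH\ge\rank\wt\bfP\ge\rank\bfP\ge\rank\bfM+1=\rank\SO(n+1,\bbC)$
to force $\rank\wt\bfU=0$, then applying Borel--Tits to put $\wt\bfP$ inside a maximal parabolic $\bfQ$ of $\SO(n+1,\bbC)$, identifying $\bfQ$ as a point stabilizer, and concluding $\bfJ$ must be trivial --- contradicting the hypothesis. Your sketch contains none of these ingredients.

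\textbf{The $n=3$ step is wrong.} The assertion that $M$-equivariance forces the $\bbC$-span of $d\tau(\mathrm{Lie}(U))$ to be a single complex line is false. Over $\bbC$ the standard $2$-dimensional real rotation representation of $M=\SO(2)$ decomposes into the two characters $e^{\pm i\theta}$, and a real $M$-invariant $2$-plane inside a complex representation can have $\bbC$-span of dimension $2$. Concretely, take $\bfJ=\{1\}\times\PGL_2$ (so $N_\bfH(\bfJ)/\bfJ\cong\PGL_2(\bbC)$ is out of play) and consider instead the honest counterexample with $\bfJ$ trivial: $\tau(x,y)=(x+iy,\,x-iy)\in\bbC^2\subset\PGL_2(\bbC)\times\PGL_2(\bbC)$ together with $\tau|_{MA}$ landing in the diagonal torus as $(e^{i\theta}a,e^{-i\theta}a)$; this is a bona fide continuous homomorphism of $P$, it is $M$-equivariant, and the $\bbC$-span of $d\tau(\mathrm{Lie}(U))$ is all of $\bbC^2$. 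This shows the conclusion you want genuinely requires the hypothesis that $\bfJ$ is nontrivial, which your argument never uses. The paper's $n=3$ argument is built precisely around that hypothesis: assuming $\bfU'\lneq\bfU$, it shows $\bfU$ is at least $2$-dimensional, then uses that $\PGL_2(\bbC)\times\PGL_2(\bbC)$ has no $3$-dimensional unipotent subgroup, that all $2$-dimensional unipotents are conjugate and are unipotent radicals of Borel subgroups, and that the unipotent radical of the standard Borel is its own centralizer, to force $\bfJ$ inside that radical and hence $\bfJ=\{1\}$, a contradiction.

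Finally, a framing comment: the statement ``the aim is to show $\tau|_U$ is trivial'' overstates what is needed and is not what compatibility asserts. Compatibility permits $\tau|_U$ to be nontrivial (indeed faithful); the content is that the Zariski closures of $\tau(U')$ and $\tau(U)$ coincide, and the paper's $n=3$ argument allows exactly this scenario.
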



\begin{proof}
As mentioned above, we can and hence will assume that $k$ is archimedean and, passing to a finite extension, in fact assume that $k=\mathbb{C}$. In particular we get that $\bfH$ is split and we identify it with $\PO(n+1, \bbC)$. In what follows, we identify algebraic subgroups of $\bfH$ with their complex points.

Assume we have a nontrivial algebraic subgroup $\bfJ<\PO(n+1, \bbC)$ and a continuous homomorphism $\tau:P\to \bfN/\bfJ$, where we denote $\bfN=\bfN_\bfH(\bfJ)$. Fix a nontrivial subgroup $U'<U$. Letting $\bfU,\bfU',\bfM,\bfA$ and $\bfP$ be the Zariski closures of the images of $U,U',M,A$ and $P$ in $\bfN/\bfJ$ correspondingly, we must show that $\bfU'=\bfU$. We can obviously assume that $\tau|_U$ is nontrivial.

We first make an observation for later use. Since $U$ is the derived subgroup of the solvable group $AU$, $\bfU$ is contained in the derived subgroup of the solvable group $\bfA\bfU$. It follows that $\bfU$ is a nontrivial unipotent group.

It is convenient to identify $U$ with the additive group $\bbR^{n-1}$ and $MA$ with its conformal group. In particular, we identify $M$ with $\SO(n-1,\bbR)$ and $A$ with the group of homotheties $\bbR^*$. Using transitivity of the action of $\SO(n-1,\bbR)$ on $\bbP^{n-2}(\bbR)$, one easily checks that every non-central subgroup of $P$ contains $U$. We conclude that $\tau$ has finite kernel.

We claim that $n=3$. We will assume that $n\geq 4$ and argue to show a contradiction. 

As $\tau(M)$ is locally isomorphic to the compact group $\SO(n-1,\bbR)$, we see that $\bfM$ is locally isomorphic to $\SO(n-1,\bbC)$. Thus $\bfM$ is almost simple and it normalizes the solvable group $\bfA\bfU$, as $M$ normalizes the solvable group $AU$ in $P$. Therefore $\bfM$ intersects the group $\bfA\bfU$ almost trivially. As $AU$ is not nilpotent, we get that $\bfA\bfU$ is not unipotent, thus $\rank \bfA\bfU \geq 1$. We conclude that $\rank \bfP\geq \rank \bfM+1$.
We note also that $\rank \bfP/\bfU=\rank \bfP$, as $\bfU$ is unipotent.

We let $\widetilde\bfU$ and $\widetilde\bfP$ be the corresponding preimages of $\bfU$ and $\bfP$ in $\bfN$ under the map $\bfN \to \bfN/\bfJ$. From the sequence of inequalities
\begin{align*}
\rank \bfH  &\geq \rank \widetilde\bfP \geq \rank\widetilde\bfP-\rank \widetilde\bfU\\
&= \rank \widetilde\bfP/ \widetilde\bfU= \rank \bfP/\bfU= \rank \bfP \\
&\geq \rank \bfM+1 =\rank \SO(n-1,\bbC)+1 \\
&=\rank \SO(n+1,\bbC)=\rank \bf H
\end{align*}
we deduce that $\rank \widetilde\bfU=0$.

Next we consider the identity component $\bfJ^0$ of $\bfJ$ and the identity component $\widetilde\bfU^0$ of $\widetilde\bfU$. We note that $\bfJ^0 \le \widetilde\bfU^0$ and that this is a proper inclusion by nontriviality of the connected group $\bfU$. As $\rank \widetilde\bfU=0$, we deduce that $\widetilde\bfU^0$ is a unipotent subgroup of $\bfN$. We conclude that both $\bfJ^0$ and $\widetilde\bfU^0$ are normal unipotent subgroups of $\widetilde\bfP$ with $\bfJ^0\lneq \widetilde\bfU^0$.

By \cite[\S3]{BorelTits}, we see that $\widetilde\bfP$ is contained in a maximal parabolic subgroup $\bfQ<\bfH$, as it contains a nontrivial normal unipotent subgroup. Note that the maximal parabolic subgroups of $\SO(n+1,\bbC)$ are the stabilizers of isotropic subspaces in $\bbC^{n+1}$. For a $k$-dimensional isotropic subspace, the semisimple part $\bfS<\bfQ$ of the Levi subgroup is locally isomorphic to $\SL_k(\bbC)\times \SO(n+1-2k,\bbC)$ noting that $2k\leq n+1$. Alternatively, this can be seen by removing a node from the Dynkin diagram associated with $\SO(n+1,\bbC)$. Almost-simplicity of $\bfM<\bfP=\widetilde\bfP/\bfJ$ implies that $\widetilde\bfP$ contains a group $\bfM'$ locally isomorphic to $\bfM$. Thus $\bfM'<\bfQ$, and upon conjugating $\bfS$ we can assume that $\bfM'<\bfS$. We conclude that $k=1$ and $\bfS$ is locally isomorphic to $\SO(n-1,\bbC)$. In particular, $\bfS=\bfM'<\widetilde\bfP$.

We denote the unipotent radical of $\bfQ$ by $\bfR$ and note that it has no proper, nontrivial $\bfS$-normalized subgroups. Indeed, this follows from the transitivity of the action of $\PO(n+1,\bbC)$ on $\bbP^{n-2}(\bbC)$. As $\bfS<\widetilde\bfP$ and $\bfR$ consists of all unipotent elements of $\bfQ$, we get that $\bfJ^0\lneq \widetilde\bfU^0$ are unipotent subgroups of $\bfR$ that are normalized by $\bfS$. We conclude that $\bfJ^0$ is trivial and $\widetilde\bfU^0=\bfR$.

As $\widetilde\bfP \le \bfQ$ contains both $\bfR$ and $\bfS$, we see that $\widetilde\bfP=\bfQ$. As $\bfJ^0$ is trivial, we obtain that $\bfJ$ is a finite normal subgroup of $\bfQ$. However, $\bfQ$ is a parabolic subgroup of the adjoint group $\bfH$, and hence it contains no nontrivial finite normal subgroup. This implies that $\bfJ$ is trivial, which gives the desired contradiction to the assumption $n\ge 4$.

We thus have $n=3$. That is, we have
\[
\bfH=\PO(4, \bbC) \simeq \PGL(2,\bbC)\times\PGL(2,\bbC).
\]
We will assume $\bfU' \lneq \bfU$ and derive a contradiction. By almost injectivity of $\tau$, $\bfU'$ is nontrivial unipotent subgroup and it follows that $\bfU \le \bfN/\bfJ$ is at least two dimensional. We thus can find a two dimensional unipotent subgroup $\bfV<\bfN$. Note that $\bfH$ has no three dimensional unipotent subgroup. It follows that $\bfJ$ has no nontrivial unipotent subgroup, thus $\bfV$ is the unipotent radical of $\bfV\bfJ$. As both $\bfV$ and $\bfJ$ are normal in $\bfV\bfJ$ and they have trivial intersection, it follows that they commute. We note that all two dimensional unipotent subgroups of $\bfH$ are conjugate and these are all unipotent radicals of Borel subgroups. It follows that $\bfN_{\bfH}(\bfV)$ is a Borel subgroup $\bfB<\bfH$. As $\bfJ$ normalizes $\bfV$, $\bfJ<\bfB$. Up to conjugation, we may assume that $\bfB$ is the standard Borel subgroup of $\PGL(2,\bbC)\times\PGL(2,\bbC)$ and it is easy to check that its unipotent radical is its own centralizer. It follows that $\bfJ<\bfV$. This forces $\bfJ$ to be trivial, as it has no unipotent subgroup. This gives the desired contradiction and thus finishes the proof.
\end{proof}

\begin{rem}
Note that $\PO(n+1, \bbC)$ can also be viewed as an algebraic group over $\bbR$ and for $k=\bbR$ it is \emph{not} compatible with $G = \SO_0(n,1)$. In the proof of arithmeticity, a Galois conjugate isomorphic to $\PO(n+1, \bbC)$ is naturally given the real Zariski topology, but for the purposes of proving Theorem \ref{theorem:superrigiditydichotomy} we may instead consider it in the complex Zariski topology.
\end{rem}

\section{Proof of Theorem \ref{theorem:superrigiditydichotomy}}
\label{section:proofs}

Throughout this section, we assume that we have a $W$-invariant measure $\nu$ on the bundle $(G\times \mathbb{P}(V))/\Gamma$ that projects to Haar measure on $G / \Gam$.

\subsection{From measures to measurable maps to varieties}
\label{subsec:H/L}

\noindent
This subsection converts our $W$-invariant measure into a measurable $\Gamma$-equivariant map between varieties.

\begin{prop}
\label{lemma:properness}
There exists a proper $k$-algebraic subgroup $\bfL < \bH$ and a measurable $W$-invariant, $\Gamma$-equivariant map $\phi: G \rightarrow \bH/ \bfL(k)$. We can also view $\phi$ as a measurable $\Gamma$-map from $W \backslash G$ to $\bH/\bfL(k)$.
\end{prop}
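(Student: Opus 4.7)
The proof proceeds in three steps --- disintegration, algebraization, and reduction to a single conjugacy class --- with the hypotheses on $\rho$ and $V$ entering only at the end to ensure properness of $\bfL$.

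\textbf{Step 1: Disintegration.} First I would lift $\nu$ to a $\Gamma$-invariant measure $\tilde\nu$ on $G \times \bbP(V)$ and disintegrate along the projection to $G$. Because $\nu$ projects to Haar measure on $G/\Gamma$, $\tilde\nu$ projects to Haar measure on $G$, and we obtain a measurable family $g \mapsto \phi_0(g)$ of probability measures on $\bbP(V)$. The $\Gamma$-invariance of $\tilde\nu$ translates into $\rho(\gamma)\phi_0(g\gamma) = \phi_0(g)$ for a.e.\ $g$ and every $\gamma \in \Gamma$, so $\phi_0$ is $\Gamma$-equivariant with $\Gamma$ acting on the target through $\rho$. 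The $W$-invariance of $\nu$ translates into $\phi_0(wg) = \phi_0(g)$, so $\phi_0$ is $W$-invariant on the left.

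\textbf{Step 2: Algebraization.} Next I would invoke the Furstenberg--Zimmer algebraicity lemma: for any probability measure $\mu$ on $\bbP(V)$ under a $k$-rational action of the $k$-algebraic group $\bH$, the stabilizer $\Stab_{\bH(k)}(\mu)$ coincides with $\bfL_\mu(k)$ for some $k$-algebraic subgroup $\bfL_\mu < \bH$, and the assignment $\mu \mapsto \bfL_\mu$ is Borel after stratifying the space of algebraic subgroups by dimension and isomorphism type. Composing with $\phi_0$ yields a measurable, $W$-invariant, $\Gamma$-equivariant (via conjugation through $\rho$) map $g \mapsto \bfL_{\phi_0(g)}$ from $G$ into the set of $k$-algebraic subgroups of $\bH$.

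\textbf{Step 3: Single conjugacy class.} Passing from $\bfL_{\phi_0(g)}$ to its $\bH(k)$-conjugacy class turns the $\Gamma$-equivariance into $\Gamma$-invariance, and together with left $W$-invariance this gives a $W$-invariant measurable function on $G/\Gamma$ taking values in the countable set of $\bH(k)$-conjugacy classes of $k$-algebraic subgroups. By Moore ergodicity of the $W$-action on $G/\Gamma$, this function is essentially constant. Fixing a representative $\bfL$ of the a.e.\ conjugacy class, the map $g \mapsto \bfL_{\phi_0(g)}$ factors measurably through $\bH/N_{\bH}(\bfL)(k)$; absorbing $N_{\bH}(\bfL)$ into $\bfL$ (which remains proper, using the near-simplicity of $\bH$ in the arithmeticity setup of \S\ref{ssec:Deduce}) and extending measurably over null sets produces the desired $\phi: G \to \bH/\bfL(k)$. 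The $W$-invariance of $\phi_0$ then descends $\phi$ to a $\Gamma$-equivariant map $W\backslash G \to \bH/\bfL(k)$.

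\textbf{Main obstacle.} The critical technical point is ensuring $\bfL_{\phi_0(g)} \subsetneq \bH$ on a positive-measure set; otherwise, by the ergodicity argument of Step 3, $\bfL_{\phi_0(g)} = \bH$ for a.e.\ $g$, which means $\phi_0(g)$ is $\bH(k)$-invariant generically. This is where the hypotheses of Theorem~\ref{theorem:superrigiditydichotomy} are decisive: $\bH \to \SL(V)$ is faithful and irreducible, and $\rho(\Gamma)$ is Zariski dense and \emph{unbounded} in $\bH(k)$. Unboundedness forces $\bH$ to be $k$-isotropic, and a standard Furstenberg-style non-amenability argument then precludes any $\bH(k)$-invariant probability measure on $\bbP(V)$ for a faithful irreducible representation. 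A secondary technical worry is the Borel measurability of $\mu \mapsto \bfL_\mu$, which is handled by the stratification invoked in Step 2.
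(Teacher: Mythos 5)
Your Step 1 matches the paper exactly, but the rest of your argument has a genuine gap: you never rule out the case where the stabilizer of the disintegration measure $\phi_0(g)$ is \emph{compact}, and this is precisely the case where the paper has to work hardest.

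Two issues conspire. First, in Step 2 you assert that $\Stab_{\bH(k)}(\mu) = \bfL_\mu(k)$ for an algebraic subgroup $\bfL_\mu$. This is not correct: what Zimmer's Theorem 3.2.4 and Corollary 3.2.12 (the references the paper actually uses) give is that the stabilizer $\tilde L$ is a \emph{compact extension} of the $k$-points of an algebraic subgroup, not an algebraic subgroup itself. Second, and more seriously, your properness argument in the ``Main obstacle'' paragraph only shows that $\phi_0(g)$ is not $\bH(k)$-invariant, i.e., that $\tilde L \neq \bH(k)$. But the proposition requires the \emph{Zariski closure} $\bfL$ of $\tilde L$ to be proper, and $\tilde L \subsetneq \bH(k)$ does not give this. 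In fact, when $\tilde L$ is compact (the compact extension part being all of it), its Zariski closure can be all of $\bH$: over a nonarchimedean field $k$, compact open subgroups of $\bH(k)$ are Zariski dense, and over $\bbR$ a maximal compact subgroup of a group like $\SL_2(\bbC)$ viewed as an $\bbR$-group is also Zariski dense. Your Furstenberg-style non-amenability argument says nothing about this situation, and your ``absorb $N_\bH(\bfL)$ into $\bfL$'' trick also fails here since $N_\bH(\bfL) = \bH$ when $\bfL = \bH$ (and degenerates to the trivial map).

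The paper closes this gap with a different and crucial step: it shows $\tilde L$ is \emph{not compact} using metric ergodicity. If $\tilde L$ were compact, $\bH(k)/\tilde L$ would carry an $\bH(k)$-invariant metric; by \cite[Cor.\ 6.7]{Bader-Gelander} the $\Gamma$-action on $W\backslash G$ is metrically ergodic, so the $\Gamma$-equivariant map $W\backslash G \to \bH(k)/\tilde L$ would be essentially constant, yielding a $\rho(\Gamma)$-fixed point in $\bH(k)/\tilde L$ and hence a conjugate of $\tilde L$ containing $\rho(\Gamma)$ --- contradicting the unboundedness of $\rho$. Only once $\tilde L$ is known to be noncompact can one invoke \cite[Prop.\ 3.2.15]{Zimmer} (Furstenberg's lemma in the form: a noncompact stabilizer of a measure on $\bbP(V)$ forces the measure to be supported on a proper quasi-linear subvariety, whence the Zariski closure of the stabilizer is proper when $V$ is faithful and irreducible) to conclude that $\bfL$ is proper. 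So the unboundedness hypothesis enters through metric ergodicity to kill the compact case, not through non-existence of invariant measures on $\bbP(V)$, which is a strictly weaker conclusion. Your Moore-ergodicity stratification idea in Step 3 is a plausible alternative organizing device, but it does not substitute for the compactness dichotomy.
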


\begin{proof}
Via disintegration, the $W$-invariant measure $\nu$ on $(G\times \mathbb{P}(V))/\Gamma$ yields a $W$-invariant $\Gamma$-map
\[
\tilde{\phi}: G \rightarrow \mathcal{P}(\mathbb{P}(V)),
\]
where $\mathcal{P}(\bbP(V))$ is the space of probability measures on $\bbP(V)$. By \cite[Cor.\ 3.2.12 and Thm.\ 3.2.4]{Zimmer}, the image of this map lies in a single $\bH(k)$-orbit that can be identified with $\bH(k)/\tilde{L}$ for $\wt{L}$ a compact extension of the $k$-points of a $k$-algebraic subgroup of $\bH(k)$.
We thus get a $\Gamma$-equivariant map $W\backslash G\to \bH(k)/\tilde{L}$.

We claim that $\tilde{L}$ is not compact. If it were, we could find an $\bH(k)$-invariant metric on $\bH(k)/\tilde{L}$, but by \cite[Cor.\ 6.7]{Bader-Gelander} the action of $\Gamma$ on $W\backslash G$ is metrically ergodic (see \cite[Def.\ 6.5]{Bader-Gelander} for the definition) and thus the map $W\backslash G\to \bH(k)/\tilde{L}$ would be essentially constant with $\Gamma$-invariant image. This would contradict the assumption that $\rho:\Gamma\to \bH(k)$ is unbounded, hence $\tilde{L}$ cannot be compact.

Let $\bfL$ be the Zariski closure of $\tilde L$. Then \cite[Prop.\ 3.2.15]{Zimmer} implies that $\bfL$ is a proper $k$-subgroup of $\bH$. We are then done by composing $G\to \bH(k)/\tilde{L}$ with the natural map $\bH(k)/\tilde{L} \to \bH/\bfL(k)$.
\end{proof}

\begin{rem}
One can also prove the group $\tilde{L}$ is noncompact by showing nontriviality of the Lyapunov spectrum of the $W$-action on the bundle $(G\times \mathbb{P}(V))/\Gamma$ using \cite[Thm.\ V.5.15]{MargulisBook}. This is delicate when $\Gamma<G$ is nonuniform, relying on its weak cocompactness and integrability of the standard cocycle $\alpha: G \times G/\Gamma \rightarrow \Gamma$.
\end{rem}

Note that the subgroup $\bfL$ of $\bfH$ might be a normal (even trivial) subgroup, or when $\bfH$ is semisimple but not simple, it might consist of a nontrivial factor group. In the latter case the $\bH$ action on $\bH/\bfL$ is not effective. However these caveats do not effect our proof.

\subsection{Algebraic representations}
\label{ssec:AlgRep}

In this subsection, we introduce the ideas from the work of Bader and Furman \cite{BaderFurman} used in the proof of our superrigidity theorem.

Let $k$ be a local field, fix a $k$-algebraic group $\bH$, and let $H = \bH(k)$ denote the $k$-points of $\bH$. To start, let $G$ be a locally compact second countable group, $\Gam < G$ be a lattice, and $\rho : \Gam \to H$ be a Zariski dense representation.

Given a closed subgroup $T < G$, a \emph{$T$-algebraic representation of $G$} consists of:
\begin{itemize}

\item a $k$-algebraic group $\bfI$,

\item a $k$-$(\bH \times \bfI)$-algebraic variety $\bfV$, which is considered as a left $\bH$-space and a right $\bfI$-space on which the $\bfI$-action is faithful,

\item a Zariski dense homomorphism $\tau : T \to \bfI(k)$,

\item an \emph{algebraic representation} of $G$ on $\bfV$, i.e., an almost-everywhere defined measurable map $\phi : G \to \bfV(k)$
such that
\[
\phi(t g \gam^{-1}) = \rho(\gam) \phi(g) \tau(t)^{-1}
\]
for every $\gam \in \Gam$, every $t \in T$, and almost every $g \in G$.

\end{itemize}
We denote the data for a $T$-algebraic representation of $G$ by $\bfI_\bfV$, $\tau_\bfV$, and $\phi_\bfV$.

A $T$-algebraic representation is called \emph{coset $T$-algebraic} when $\bfV$ is the coset space $\bH / \bfJ$ for some $k$-algebraic subgroup $\bfJ$ of $\bH$, and $\bfI$ is a $k$-subgroup of $N_\bH(\bfJ) / \bfJ$, where $N_\bH(\bfJ)$ denotes the normalizer of $\bfJ$ in $\bH$. Given another $T$-algebraic representation $\bfU$, let $\bfI_{\bfU, \bfV}$ be the Zariski closure of $(\tau_\bfU \times \tau_\bfV)(T)$ in $\bfI_\bfU \times \bfI_\bfV$. Then a \emph{morphism} $\pi : \bfU \to \bfV$ is an $(\bH \times \bfI_{\bfU, \bfV})$-equivariant $k$-regular morphism such that $\phi_\bfV$ agrees almost everywhere with $\pi \circ \phi_\bfU$. Recall that an \emph{initial object} in a category is an object that has exactly one morphism to all other objects in the category. The proof of our superrigidity theorem uses the following.

\begin{thm}[Thm.\ 4.3 \cite{BaderFurman}]\label{thm:BF}
The collection of $T$-algebraic representations of $G$ forms a category. If the $T$-action on $G / \Gam$ is weakly mixing, then this category has an initial object and this initial object is a coset $T$-algebraic representation.
\end{thm}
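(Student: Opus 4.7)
The plan proceeds in three stages.

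\emph{First}, I verify that $T$-algebraic representations form a category. Composition of morphisms is well defined: given $\pi:\bfU\to\bfV$ and $\pi':\bfV\to\bfW$, the Zariski closure of $(\tau_\bfU\times\tau_\bfV\times\tau_\bfW)(T)$ surjects onto each pairwise closure $\bfI_{\bfU,\bfV}$, $\bfI_{\bfV,\bfW}$, and $\bfI_{\bfU,\bfW}$, so $\pi'\circ\pi$ is automatically $(\bfH\times\bfI_{\bfU,\bfW})$-equivariant, and the identity $\phi_\bfW=\pi'\circ\pi\circ\phi_\bfU$ almost everywhere holds by chaining the given relations. I also construct binary products in this category: given $\bfU$ and $\bfV$, let $\bfW\subseteq\bfV_\bfU\times\bfV_\bfV$ be the Zariski closure of the essential image of $\phi_\bfU\times\phi_\bfV$, equipped with the diagonal left $\bfH$-action and the right action of $\bfI_{\bfU,\bfV}$ modulo the kernel of its action on $\bfW$ to enforce faithfulness; the two projections are morphisms in the category.

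\emph{Second}, I reduce each representation to a coset form using ergodicity of the $T$-action on $G/\Gam$, which is a consequence of weak mixing. Viewing $\phi_\bfU$ modulo the right $\bfI$-action as a $\Gam$-equivariant map $T\backslash G\to\bfV_\bfU/\bfI$ and applying the classification of measurable $\Gam$-invariant subsets of $\bfH$-algebraic varieties from \cite[Thm.\ 3.2.4, Cor.\ 3.2.12]{Zimmer}, the essential image lies in a single $\bfH$-orbit $\bfH/\bfJ_\bfU$, where $\bfJ_\bfU$ is the $\bfH$-stabilizer of a generic $\bfI$-orbit. Since the right $\bfI$-action commutes with the left $\bfH$-action, $\bfI$ is a $k$-subgroup of $N_\bfH(\bfJ_\bfU)/\bfJ_\bfU$, so each $\bfU$ is equivalent in the category to a coset representation determined, up to $\bfH$-conjugation, by the class of $\bfJ_\bfU$.

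\emph{Third}, I construct the initial object as the coset representation attached to a minimal $\bfJ^\star$. The collection of $k$-algebraic subgroups of $\bfH$ satisfies the descending chain condition under $\bfH$-conjugate containment, so a minimum exists; pick $\bfU^\star$ realizing it. Given any other representation $\bfV$, form the product $\bfU^\star\times\bfV$ from stage one; its essential image lies in some $\bfH$-diagonal orbit of $\bfH/\bfJ^\star\times\bfH/\bfJ_\bfV$. \textbf{Here weak mixing is indispensable}: it is equivalent to ergodicity of the diagonal $T$-action on $G/\Gam\times G/\Gam$, and this ergodicity, combined with the orbit-classification argument above applied to the product, forces the essential image of $\phi_{\bfU^\star}\times\phi_\bfV$ to lie in a \emph{single} $\bfH$-diagonal orbit of the product, identified with $\bfH/(g\bfJ^\star g^{-1}\cap\bfJ_\bfV)$ for some $g\in\bfH$. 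Minimality of $\bfJ^\star$ then forces $g\bfJ^\star g^{-1}\cap\bfJ_\bfV=g\bfJ^\star g^{-1}$, so the projection of this orbit to $\bfH/\bfJ^\star$ is an isomorphism, and composing its inverse with the projection to $\bfH/\bfJ_\bfV$ yields the desired morphism $\bfU^\star\to\bfV$. Uniqueness follows from Zariski density of the essential image of $\phi_{\bfU^\star}$ in its target.

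The principal obstacle is confining the essential image of the product $\phi_{\bfU^\star}\times\phi_\bfV$ to a single $\bfH$-diagonal orbit via weak mixing, as this is precisely where the dynamical hypothesis interacts nontrivially with the algebraic structure on the product variety. The rest of the proof is categorical bookkeeping and Noetherian arguments on algebraic subgroups of $\bfH$.
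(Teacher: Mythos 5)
The paper does not prove this statement; it is quoted verbatim from Bader--Furman \cite{BaderFurman} (their Theorem 4.3), so the paper's ``proof'' is a citation. Your overall strategy --- coset reduction via Zimmer's orbit classification, then minimizing the stabilizer subgroup by Noetherianity and using a product construction to extract a morphism out of the minimal coset --- is close in spirit to the actual argument. However, there is a concrete gap in where you invoke weak mixing, and that gap matters because it is exactly the hypothesis the theorem is about.

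In your third stage you claim that weak mixing is needed because it is ``equivalent to ergodicity of the diagonal $T$-action on $G/\Gam\times G/\Gam$'' and that this diagonal ergodicity is what confines the essential image of $\phi_{\bfU^\star}\times\phi_\bfV$ to a single $\bfH$-orbit. But $\phi_{\bfU^\star}\times\phi_\bfV$ is a map from a \emph{single} copy of $G$ into $\bfH/\bfJ^\star\times\bfH/\bfJ_\bfV$, $(T,\Gam)$-equivariant through the same $\rho$ on each factor; once you mod out by the right $\bfI$-action you only need ergodicity of $T\curvearrowright G/\Gam$ to apply the orbit-classification. Diagonal ergodicity on $G/\Gam\times G/\Gam$ is not what is being used there. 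So your argument, as written, only uses ergodicity of $T$ on $G/\Gam$ --- a strictly weaker hypothesis than weak mixing --- and therefore either you have proven a strictly stronger theorem (which is suspicious, since Bader--Furman carefully assume weak mixing), or, more likely, you have not identified where the weak mixing hypothesis actually enters. In the Bader--Furman framework the relevant issue is subtler: it concerns the behavior of the $\bfI$-bundle over $T\backslash G$ and ensuring that certain intermediate coset constructions (and the Zariski densities of the $\tau$'s onto the various $\bfI$'s) remain controlled when passing between representations; ordinary ergodicity on $G/\Gam$ is not enough for that.

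Two smaller points. First, the descending chain condition on $k$-algebraic subgroups produces a \emph{minimal} element of the partially ordered set of stabilizers, not a \emph{minimum}; your argument in fact only needs minimality (the intersection $g\bfJ^\star g^{-1}\cap\bfJ_\bfV\subseteq g\bfJ^\star g^{-1}$ together with minimality forces equality), but the statement should be corrected. Second, your stage two asserts that ``each $\bfU$ is equivalent in the category to a coset representation,'' but the theorem only claims the \emph{initial} object is of coset type, and it is not clear in what sense a general $\bfU$ (where $\bfV_\bfU$ is an arbitrary $k$-$(\bH\times\bfI)$-variety) is isomorphic to a coset. What you actually need, and should say explicitly, is that every $\bfU$ admits a morphism from (or a comparison with) some coset representation $\bfH/\bfJ_\bfU$ so that the minimality argument applies; merely ``equivalent'' is too strong and is not established by the orbit argument alone, since the quotient $\bfV_\bfU/\bfI$ you form need not be a variety and the passage from a single $\bfH$-orbit in $\bfV_\bfU/\bfI$ back to a coset $\bfH/\bfJ_\bfU$ with the right $\bfI$-data and faithful action requires the more delicate bookkeeping that Bader--Furman carry out.
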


An initial object is characterized by the fact that $\bfJ$ is the minimal subgroup, up to conjugacy, that can arise as a stabilizer in any coset $T$-algebraic representation in the category.

Though not stated explicitly, the following is also implicit in \cite{BaderFurman}. Given two subgroups $S$ and $T$ of $G$, we say that their initial objects $\phi_S : G \to \bfV(k)$ and $\phi_T : G \to \bfW(k)$ {\em have the same map} if $\bfV=\bfW$ as $k$-varieties and if $\phi_S$ and $\phi_T$ agree away from a set of measure zero.

\begin{lemma}\label{lem:initobj}
Assume that the action of $T$ on $G/\Gam$ is weakly mixing. Then initial objects for $T$ and $N_G(T)$ have the same map. Moreover, the initial object for $T$ and for the iterated normalizer $N_G(N_G(\dots (N_G(T))\dots))$ have the same map.
\end{lemma}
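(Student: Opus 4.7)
The plan is to promote the initial $T$-algebraic representation $\phi_T$ to an $N_G(T)$-algebraic representation with the same underlying map, and then to check that this promotion is itself initial. The iterated-normalizer statement will follow by induction along the tower of normalizers.

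Fix an initial $T$-algebraic representation $(\phi_T,\tau_T,\bfI_T,\bfV_T)$ with coset target $\bfV_T=\bH/\bfJ_T$, as supplied by Theorem~\ref{thm:BF}. For each $n\in N_G(T)$, the twisted pair defined by $\phi_T^n(g):=\phi_T(n^{-1}g)$ and $\tau_T^n(t):=\tau_T(n^{-1}tn)$ is readily checked to be a $T$-algebraic representation with the same target $\bfV_T$, and the assignment $(\phi,\tau)\mapsto(\phi^n,\tau^n)$ is an auto-equivalence of the category of $T$-algebraic representations. Auto-equivalences preserve initial objects, so $(\phi_T^n,\tau_T^n)$ is again initial, and uniqueness up to unique isomorphism produces a morphism $\alpha_n$ from $(\phi_T,\tau_T)$ to $(\phi_T^n,\tau_T^n)$. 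As an $\bH$-equivariant $k$-regular self-map of $\bH/\bfJ_T$, $\alpha_n$ is right-translation by an element $\tau_N(n)^{-1}\in(N_\bH(\bfJ_T)/\bfJ_T)(k)$, yielding
\[
\phi_T(n^{-1}g)=\phi_T(g)\cdot\tau_N(n)^{-1}\qquad\text{for a.e.~}g.
\]
Comparing the two ways to produce a morphism to $(\phi_T^{n_1n_2},\tau_T^{n_1n_2})$---directly as $\alpha_{n_1n_2}$, or as the composite obtained by twisting $\alpha_{n_1}$ by $n_2$ and precomposing with $\alpha_{n_2}$---shows by uniqueness that $\tau_N$ is a homomorphism extending $\tau_T$. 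Measurability of $\tau_N$ follows from Fubini applied to the displayed identity, and as a measurable homomorphism between Polish groups it is continuous. Taking $\bfI_N$ to be the Zariski closure of $\tau_N(N_G(T))$ in $N_\bH(\bfJ_T)/\bfJ_T$ then presents $\phi_T$ as an $N_G(T)$-algebraic representation on the same target variety, with the same underlying map.

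For initiality of this promotion, let $(\phi',\tau',\bfI',\bfV')$ be any $N_G(T)$-algebraic representation. Its restriction to $T$ (after replacing $\bfI'$ by the Zariski closure of $\tau'(T)$) is a $T$-algebraic representation, and initiality of $\phi_T$ in the $T$-category provides a unique $T$-morphism $\pi:\bfV_T\to\bfV'$ with $\phi'=\pi\circ\phi_T$ almost everywhere. Combining the $N_G(T)$-equivariance of $\phi'$ with the extension formula above yields
\[
\pi\bigl(v\cdot\tau_N(n)^{-1}\bigr)=\pi(v)\cdot\tau'(n)^{-1}
\]
for $v$ in the image of $\phi_T$; since $\pi$ is $\bH$-equivariant, the left $\bH$-action is transitive on $\bH/\bfJ_T$, and the left and right actions commute, this identity propagates to all of $\bfV_T$ as a $k$-variety. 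Hence $\pi$ is an $N_G(T)$-morphism, and its uniqueness in the larger category is inherited from the $T$-category. The promoted object is therefore initial, proving that the initial objects for $T$ and for $N_G(T)$ share the same underlying map.

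Finally, since $T\le N_G(T)$ act on the same space $G/\Gam$, any $N_G(T)$-invariant $L^2$-function on $(G/\Gam)^2$ is $T$-invariant and hence constant; thus weak mixing of $T$ is inherited by every group in the ascending chain $T\le N_G(T)\le N_G(N_G(T))\le\cdots$, and the first assertion applies at every stage, preserving the underlying map throughout. The most delicate points in this outline are the passage from measurability to continuity for $\tau_N$ together with the verification that its Zariski closure yields a genuinely compatible $k$-algebraic acting group on $\bfV_T$, and the propagation of the equivariance identity from the image of $\phi_T$ to all of $\bfV_T$; both rely essentially on the coset form of the target provided by Theorem~\ref{thm:BF}.
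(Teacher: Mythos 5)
Your argument is essentially correct and follows the same conceptual route as the paper, but where the paper simply cites \cite[Thm.\ 4.6]{BaderFurman} for the forward direction (the initial $T$-object acquires an $N_G(T)$-structure and remains initial) and closes the other direction with a one-line minimality remark, you reprove the content of that cited theorem from scratch: twisting by $n\in N_G(T)$ is an auto-equivalence of the $T$-category, uniqueness of initial objects produces the right-translation cocycle $\tau_N$ on the coset target, and you then verify initiality of the promoted object directly by propagating the equivariance identity from the essential image of $\phi_T$ to all of $\bH/\bfJ_T$ via left $\bH$-transitivity. Your observation that weak mixing of $T$ on $G/\Gam$ is automatically inherited by every overgroup, so the iterated normalizers again satisfy the hypothesis of Theorem~\ref{thm:BF}, is exactly what is needed to iterate and is implicit in the paper's ``follows immediately.'' So the approach is the same; you are supplying the omitted proof of the cited black box, which makes the argument more self-contained at the cost of length.

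One small but real bookkeeping slip: with the paper's convention $\phi(tg\gamma^{-1})=\rho(\gamma)\phi(g)\tau(t)^{-1}$, the correct extension identity is $\phi_T(n^{-1}g)=\phi_T(g)\,\tau_N(n)$ (no inverse). As written, $\phi_T(n^{-1}g)=\phi_T(g)\,\tau_N(n)^{-1}$ forces $\tau_N(n_1n_2)=\tau_N(n_2)\tau_N(n_1)$, so $\tau_N$ is an anti-homomorphism and restricts to $\tau_T^{-1}$ on $T$ rather than extending $\tau_T$. Dropping the inverse fixes this and leaves the rest of your derivation intact.
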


\begin{proof}
For the first claim, the forward direction is the content of \cite[Thm.\ 4.6]{BaderFurman}. For the backward direction, if $\phi : G \to \bfH/\bfJ(k)$ is an initial object in the category of $N_G(T)$-algebraic representations with associated homomorphism $\tau$ from $T$ to $N_\bfH(\bfJ)/\bfJ(k)$, then $\phi$ and $\tau\vert_T$ form a $T$-algebraic representation and this representation must be initial by minimality. Indeed, otherwise another application of the forward direction contradicts minimality of $\bfJ$. The second claim follows immediately from the first.
\end{proof}

\subsection{From measurable maps to extension of homomorphisms}
\label{subsec:extends}

We now complete the proof of Theorem \ref{theorem:superrigiditydichotomy}. More specifically, we show that the existence of the map $\phi: G \rightarrow \bH/ \bfL(k)$ from Proposition \ref{lemma:properness} implies that the representation $\rho$ of $\Gamma$ extends to $G$.

\begin{proof}[Proof of Theorem \ref{theorem:superrigiditydichotomy}]
Observe that the action of $G$ on $G/\Gamma$ is mixing by the Howe--Moore theorem. In particular, the action of each noncompact subgroup of $G$ is weakly mixing on $G/\Gamma$. This allows us to apply freely the discussion and results of \S\ref{ssec:AlgRep} regarding $T$-algebraic representations of $G$ for an arbitrary noncompact closed subgroup $T$ of $G$.

Recall our setting from \S\ref{sec:FN}, and first consider $T=U'=U\cap W$, which is noncompact. Given an initial object in the category of $U^\prime$-algebraic representations of $G$, Theorem \ref{thm:BF} implies that there is a $k$-algebraic subgroup $\bfJ$ of $\bH$ such that this object is a measurable map $\Psi:G \rightarrow \bH/\bfJ(k)$ that is $(U^\prime \times \Gamma)$-equivariant for a continuous homomorphism $\tau: U^\prime \to N_{\bfH}(\bfJ)/\bfJ(k)$. Since $U^\prime$ is normal in $U$ and $U$ is normal in $P$, Lemma \ref{lem:initobj} implies that $\tau$ extends to a continuous homomorphism $\tau:P \to N_{\bfH}(\bfJ)/\bfJ(k)$ making the map $\Psi$ an initial object in the category of $P$-algebraic representations of $G$.

We claim that $\bfJ$ is trivial. Assume this is not the case. Since the pair consisting of $k$ and $\bfH$ is compatible with $G$ we know that the Zariski closure of $\tau(U')$ coincides with the Zariski closure of $\tau(U)$. We note that the $W$-invariant map $\phi$ is also $U'$-invariant, as $U'<W$, thus it factors via
$\Psi:G\to \bH/\bfJ(k)$ and via
\[
G\to \bH/\bfJ(k) \to (\bH/\bfJ)/\overline{\tau(U')}(k)= (\bH/\bfJ)/\overline{\tau(U)}(k)
\]
by $U'$-invariance, where $\overline{\tau(U')}$ and $\overline{\tau(U)}$ are the Zariski closures. Then, $\Psi$ is $U$-equivariant, so the latter composed map is $U$-invariant, and it follows that $\phi$ is also $U$-invariant. Since $\phi$ is also $W$-invariant and $\langle U, W \rangle = G$, we obtain that $\phi : G \to \bH / \bfL(k)$ is an essentially constant $\Gam$-equivariant map, hence $\rho(\Gam)$ has a fixed point on $\bH / \bfL(k)$. This is impossible since $\rho(\Gamma)$ is Zariski dense in $\bH$ and $\bfL$ is a proper algebraic subgroup of the connected adjoint group $\bH$. We conclude that $\bfJ$ is indeed trivial.

Since $\bfJ$ is trivial and $A < P$, we view $\tau$ as a morphism $\tau:P \to \bfH$ and $\Psi$ as a $(P\times \Gamma)$-map $\Psi: G \rightarrow \bH(k)$. In particular, $\Psi$ is $A$-equivariant via the homomorphism $\tau|_A$, and therefore must be an initial object for the category of $A$-algebraic representations by Lemma \ref{lem:initobj}. Once again, Lemma \ref{lem:initobj} implies that $\tau|_A$ extends to a homomorphism $\tau':N_G(A) \rightarrow \bH(k)$ for which $\Psi$ is $N_G(A)$-equivariant, where $N_G(A)$ is the normalizer of $A$ in $G$.

Notice that $N_G(A)$ contains a Weyl element $w$ for $A$ and hence $\langle P, N_G(A) \rangle = G$. Since $\Psi$ is equivariant for both $P$ and $N_G(A)$, using \cite[Prop.\ 5.1]{BaderFurman} and following the end of the proof of \cite[Thm.\ 1.3]{BaderFurman}, we deduce that $\rho : \Gam \to \bfH(k)$ extends to a continuous homomorphism $\wh{\rho} : G \to \bfH(k)$. This proves the theorem.
\end{proof}

\section{Theorem \ref{thm:dehnfilling} and final remarks}\label{sec:Final}

In this section, we adapt the proof of Theorem \ref{thm:main} to prove Theorem \ref{thm:dehnfilling}, then make some final remarks and ask some questions related to our main results.

\subsection{The proof of Theorem \ref{thm:dehnfilling}}\label{ssec:FinalA}

Let $M$ and $N$ be connected, orientable hyperbolic $3$-manifolds of finite volume, and suppose that $N$ is obtained by Dehn filling on a nonempty subset of the torus cusps of $M$. If $\Gam = \pi_1(M)$ and $\Lam = \pi_1(N)$, the map $M \to N$ induced by the filling determines a surjective homomorphism $\rho : \Gam \to \Lam$. Since $\Gam$ and $\Lam$ are naturally lattices in $\SO_0(3,1)$, we can consider $\rho$ as a homomorphism from $\Gam$ to $\SO_0(3,1)$ with $\rho(\Gam)$ isomorphic to $\Lam$. Note that $\rho$ has nontrivial kernel.

If either of $M$ or $N$ is nonarithmetic, then Theorem \ref{thm:main} immediately implies that $M$ and $N$ contain only finitely many totally geodesic surfaces. However, Theorem \ref{thm:main} is not applicable with both $M$ and $N$ are arithmetic. Before giving the proof of Theorem \ref{thm:dehnfilling}, we give an example to show that the theorem is indeed nontrivial.

\begin{ex}
Let $N$ be the complement in $S^3$ of the $3$-chain link, which is also called $6_1^3$ in the Rolfsen tables \cite{Rolfsen}. Then $N$ is arithmetic \cite[\S 9.2]{MaclachlanReid}. Moreover, $N$ is obtained from trivial Dehn filling on one component of the four component arithmetic link complement given in \cite[Ex.\ 6.8.10]{Thurston}, also known as L12n2210. See Figure \ref{fig:Links}.
\begin{figure}
\centering
\definecolor{linkcolor0}{rgb}{0.85, 0.15, 0.15}
\definecolor{linkcolor1}{rgb}{0.15, 0.15, 0.85}
\definecolor{linkcolor2}{rgb}{0.15, 0.85, 0.15}
\begin{tikzpicture}[line width=1.0, line cap=round, line join=round,scale=0.5]
  \begin{scope}[color=linkcolor0]
    \draw (2.00, 1.61) .. controls (1.74, 1.61) and (1.58, 1.33) .. (1.58, 1.03);
    \draw (1.58, 1.03) .. controls (1.58, 0.16) and (3.45, 0.16) ..
          (4.95, 0.16) .. controls (6.39, 0.16) and (8.31, 0.16) .. (8.32, 0.75);
    \draw (8.33, 1.03) .. controls (8.34, 1.36) and (7.99, 1.56) .. (7.63, 1.56);
    \draw (7.63, 1.56) .. controls (5.87, 1.57) and (4.11, 1.59) .. (2.35, 1.60);
  \end{scope}
  \begin{scope}[color=linkcolor1]
    \draw (4.88, 5.82) .. controls (4.37, 6.25) and (3.18, 4.90) ..
          (2.27, 3.87) .. controls (1.35, 2.85) and (0.16, 1.50) ..
          (0.64, 1.02) .. controls (0.89, 0.77) and (1.27, 0.70) .. (1.49, 0.93);
    \draw (1.69, 1.15) .. controls (1.83, 1.30) and (1.97, 1.45) .. (2.10, 1.60);
    \draw (2.10, 1.60) .. controls (2.90, 2.47) and (3.69, 3.34) .. (4.49, 4.21);
    \draw (4.81, 4.56) .. controls (5.17, 4.95) and (5.25, 5.52) .. (4.88, 5.82);
  \end{scope}
  \begin{scope}[color=linkcolor2]
    \draw (4.93, 5.89) .. controls (5.15, 6.14) and (6.49, 4.83) ..
          (7.47, 3.87) .. controls (8.48, 2.89) and (9.79, 1.61) ..
          (9.29, 1.03) .. controls (9.03, 0.73) and (8.60, 0.64) .. (8.33, 0.90);
    \draw (8.33, 0.90) .. controls (8.14, 1.08) and (7.96, 1.25) .. (7.77, 1.43);
    \draw (7.46, 1.73) .. controls (6.52, 2.61) and (5.59, 3.50) .. (4.65, 4.39);
    \draw (4.65, 4.39) .. controls (4.31, 4.71) and (4.40, 5.26) .. (4.72, 5.64);
  \end{scope}
\end{tikzpicture}
\definecolor{linkcolor0}{rgb}{0.85, 0.15, 0.15}
\definecolor{linkcolor1}{rgb}{0.15, 0.15, 0.85}
\definecolor{linkcolor2}{rgb}{0.15, 0.85, 0.15}
\definecolor{linkcolor3}{rgb}{0.15, 0.85, 0.85}
\begin{tikzpicture}[line width=1.0, line cap=round, line join=round,scale=0.5]
  \begin{scope}[color=linkcolor0]
    \draw (2.00, 1.61) .. controls (1.74, 1.61) and (1.58, 1.33) .. (1.58, 1.03);
    \draw (1.58, 1.03) .. controls (1.58, 0.16) and (3.45, 0.16) ..
          (4.95, 0.16) .. controls (6.39, 0.16) and (8.31, 0.16) .. (8.32, 0.75);
    \draw (8.33, 1.03) .. controls (8.34, 1.36) and (7.98, 1.56) .. (7.62, 1.56);
    \draw (7.62, 1.56) .. controls (7.30, 1.56) and (6.98, 1.57) .. (6.66, 1.57);
    \draw (6.66, 1.57) .. controls (5.58, 1.58) and (4.50, 1.59) .. (3.42, 1.59);
    \draw (2.96, 1.60) .. controls (2.75, 1.60) and (2.53, 1.60) .. (2.32, 1.60);
  \end{scope}
  \begin{scope}[color=linkcolor1]
    \draw (4.87, 5.83) .. controls (4.37, 6.25) and (3.18, 4.90) ..
          (2.27, 3.87) .. controls (1.35, 2.85) and (0.16, 1.50) ..
          (0.64, 1.02) .. controls (0.89, 0.77) and (1.27, 0.70) .. (1.49, 0.93);
    \draw (1.69, 1.15) .. controls (1.83, 1.30) and (1.97, 1.45) .. (2.10, 1.60);
    \draw (2.10, 1.60) .. controls (2.45, 1.99) and (2.80, 2.37) .. (3.15, 2.75);
    \draw (3.15, 2.75) .. controls (3.34, 2.95) and (3.53, 3.16) .. (3.71, 3.36);
    \draw (4.01, 3.68) .. controls (4.16, 3.85) and (4.32, 4.02) .. (4.47, 4.19);
    \draw (4.79, 4.54) .. controls (5.15, 4.93) and (5.25, 5.52) .. (4.87, 5.83);
  \end{scope}
  \begin{scope}[color=linkcolor2]
    \draw (4.93, 5.89) .. controls (5.15, 6.14) and (6.50, 4.82) ..
          (7.47, 3.87) .. controls (8.48, 2.89) and (9.79, 1.61) ..
          (9.29, 1.03) .. controls (9.03, 0.73) and (8.60, 0.64) .. (8.33, 0.90);
    \draw (8.33, 0.90) .. controls (8.14, 1.07) and (7.95, 1.25) .. (7.76, 1.43);
    \draw (7.44, 1.73) .. controls (7.23, 1.92) and (7.03, 2.11) .. (6.82, 2.31);
    \draw (6.47, 2.64) .. controls (6.16, 2.93) and (5.85, 3.22) .. (5.54, 3.51);
    \draw (5.54, 3.51) .. controls (5.23, 3.80) and (4.93, 4.08) .. (4.63, 4.36);
    \draw (4.63, 4.36) .. controls (4.28, 4.69) and (4.37, 5.26) .. (4.71, 5.65);
  \end{scope}
  \begin{scope}[color=linkcolor3]
    \draw (3.15, 2.90) .. controls (3.14, 3.26) and (3.48, 3.51) .. (3.85, 3.51);
    \draw (3.85, 3.51) .. controls (4.33, 3.51) and (4.81, 3.51) .. (5.29, 3.51);
    \draw (5.75, 3.51) .. controls (6.27, 3.51) and (6.63, 3.02) .. (6.64, 2.47);
    \draw (6.64, 2.47) .. controls (6.65, 2.23) and (6.65, 1.99) .. (6.66, 1.75);
    \draw (6.66, 1.40) .. controls (6.68, 0.72) and (5.74, 0.72) ..
          (4.93, 0.71) .. controls (4.10, 0.70) and (3.19, 0.87) .. (3.17, 1.60);
    \draw (3.17, 1.60) .. controls (3.17, 1.90) and (3.16, 2.21) .. (3.16, 2.52);
  \end{scope}
\end{tikzpicture}
\caption{The arithmetic links $6_1^3$ and L12n2210}\label{fig:Links}
\end{figure}
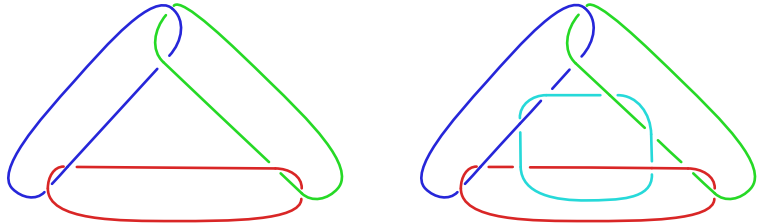
Using symmetries of the link diagrams, one sees that there are totally geodesic $4$-punctured spheres in $M$ that fill to become totally geodesic $3$-punctured spheres in $N$. Therefore, the collection of totally geodesic surfaces in $M$ that fill to a totally geodesic surface in $N$ is nonempty.
\end{ex}

\noindent
One can easily find other examples of this nature. We now prove Theorem \ref{thm:dehnfilling}.

\begin{proof}[Proof of Theorem \ref{thm:dehnfilling}]
Given $M$ and $N$ as in the statement of the theorem, let $\rho : \Gam \to \SO_0(3,1)$ be the representation defined above. We will prove that if infinitely many totally geodesic surfaces in $N$ are the images of totally geodesic surfaces in $M$, then $\rho$ must extend to a homomorphism $\SO_0(3,1) \to \PO_0(3,1)$. Since $\rho$ has a nontrivial kernel, this is impossible. We let $G$ denote $\SO_0(3,1)$ containing $\Gam$ as a lattice and $H$ denote $\PO_0(3,1)$ as the target for $\rho$. Note that $\rho$ has unbounded and Zariski dense image.

To apply Theorem \ref{theorem:superrigiditydichotomy}, we take $W = \SO_0(2,1)$ and then must produce an $H$-representation $V$ and a $W$-invariant measure $\nu$ on the bundle $(G \times \bbP(V)) / \Gam$ that projects to Haar measure on $G / \Gam$. Then Theorem \ref{theorem:superrigiditydichotomy} implies that $\rho$ extends to a representation of $G$, which gives the desired contradiction.

As in the proof of Theorem \ref{thm:main}, we produce this measure by finding an invariant line bundle over each closed $W$-orbit in $G/\Gam$. Let $V$ be a nontrivial, faithful, irreducible summand of the third exterior power of $\fraks\frako(3,1)$ with the adjoint action of $H$. Let $\{\Del_i\}$ be Fuchsian subgroups of $\Gam$ associated with totally geodesic surfaces of $M$ that remain totally geodesic under Dehn filling. Then $\rho(\Del_i)$ is a Fuchsian subgroup of $\Lam = \rho(\Gam)$, and hence it is contained in a subgroup $W_i$ of $H$ conjugate to the standard embedding of $\Isom^+(\bbH^2)$ in $\SO_0(3,1)$. Moreover, $W_i$ stabilizes a line in $V$ under the adjoint action, and the construction of $\nu$ proceeds exactly as in the proof of Proposition \ref{prop:liftingmeasures}. Thus Theorem \ref{theorem:superrigiditydichotomy} applies and the proof is complete.
\end{proof}

\begin{rem}
We also note that it is frequently the case that a $\pi_1$-injective surface in a $3$-manifold remains $\pi_1$-injective under Dehn filling (e.g., see \cite{CooperLong}). Therefore, infinitely many totally geodesic surfaces in $M$ may descend to $\pi_1$-injective surfaces in $N$. Our results say that these surfaces are very rarely totally geodesic.
\end{rem}

\subsection{Final remarks and questions}\label{ssec:FinalB}

We begin by noting that every known construction of a nonarithmetic hyperbolic $n$-manifold for $n \ge 4$ contains a totally geodesic hypersurface. Theorem \ref{thm:main} implies that the set of such hypersurfaces is always finite.

\begin{qtn}
For each $n \ge 4$ and $1 \le k < n-1$, does there exist a nonarithmetic hyperbolic $n$-manifold for which the set of totally geodesic subspaces of codimension $k$ is empty?
\end{qtn}

\noindent Answering this question in the positive will require a genuinely new construction of hyperbolic manifolds.
Perhaps more tractable is:

\begin{qtn}
For each $m \ge 1$, is there a hyperbolic $3$-manifold containing exactly $m$ totally geodesic surfaces?
\end{qtn}

\noindent
Finally, we ask about asymptotic properties of our results:

\begin{qtn}
Let $H_{n,m}(v)$ be the number of lattices $\Gam < \SO_0(n,1)$ such that $\bbH^n / \Gam$ contains exactly $m$ totally geodesic hypersurfaces and $\mathrm{vol}(\bbH^n / \Gam) < v$. What is the growth type of $H_{n,m}(v)$ as a function of $v$?
\end{qtn}

\begin{rem}
In part (3) of Corollary \ref{cor:kleinian}, we note that having infinitely many totally geodesic hypersurfaces gives a geometric characterization of arithmeticity in even dimensions. For $n \neq 3,7$ odd, there is a similar statement. In this case, every arithmetic hyperbolic manifold either contains maximal totally geodesic submanifold of codimension $1$ or $2$ (see \cite{Meyer}), hence having infinitely many such submanifolds again characterizes arithmeticity. There are arithmetic and nonarithmetic hyperbolic $3$-manifolds that contain no totally geodesic surfaces, so such a characterization is not possible, see \cite[\S6.1]{FLMS} for discussion and examples. For $n = 7$, one must classify the geodesic submanifolds of the arithmetic manifolds arising from triality; for those arithmetic manifolds not arising from that construction, the situation is the same as for other odd dimensions greater than $3$.
\end{rem}
\bibliographystyle{abbrv}
\bibliography{Biblio}

\begin{thebibliography}{10}

\bibitem{BaderFurman}
U.~Bader and A.~Furman.
\newblock An extension of {M}argulis' super-rigidity theorem.
\newblock Preprint: arXiv:1810.01608.

\bibitem{BFS}
U.~Bader, A.~Furman, and R.~Sauer.
\newblock Integrable measure equivalence and rigidity of hyperbolic lattices.
\newblock {\em Invent. Math.}, 194(2):313--379, 2013.

\bibitem{Bader-Gelander}
U.~Bader and T.~Gelander.
\newblock Equicontinuous actions of semisimple groups.
\newblock {\em Groups Geom. Dyn.}, 11(3):1003--1039, 2017.

\bibitem{BO}
Y.~Benoist and H.~Oh.
\newblock Geodesic planes in geometrically finite acylindrical 3-manifolds.
\newblock Preprint: arXiv:1802.04423.

\bibitem{BorelCrelle}
A.~Borel.
\newblock Density and maximality of arithmetic subgroups.
\newblock {\em J. Reine Angew. Math.}, 224:78--89, 1966.

\bibitem{BorelTits}
A.~Borel and J.~Tits.
\newblock {\'E}l\'ements unipotents et sous-groupes paraboliques de groupes
  r\'eductifs. {I}.
\newblock {\em Invent. Math.}, 12:95--104, 1971.

\bibitem{BrownFisherHurtado}
A.~Brown, D.~Fisher, and S.~Hurtado.
\newblock Zimmer's conjecture: subexponential growth, measure rigidity, and
  strong property $(\mathrm{T})$.
\newblock Preprint: arXiv:1608.04995, 2016.

\bibitem{BrownFisherHurtado2}
A.~Brown, D.~Fisher, and S.~Hurtado.
\newblock Zimmer's conjecture for actions of $\mathrm{SL}(m,\mathbb{Z})$.
\newblock Preprint: arXiv:1710.02735, 2017.

\bibitem{Calabi}
E.~Calabi.
\newblock On compact, {R}iemannian manifolds with constant curvature. {I}.
\newblock In {\em Proc. {S}ympos. {P}ure {M}ath., {V}ol. {III}}, pages
  155--180. American Mathematical Society, Providence, R.I., 1961.

\bibitem{CooperLong}
D.~Cooper and D.~D. Long.
\newblock Some surface subgroups survive surgery.
\newblock {\em Geom. Topol.}, 5:347--367, 2001.

\bibitem{DaniMargulis}
S.~G. Dani and G.~A. Margulis.
\newblock Limit distributions of orbits of unipotent flows and values of
  quadratic forms.
\newblock In {\em I. {M}. {G}el'fand {S}eminar}, volume~16 of {\em Adv. Soviet
  Math.}, pages 91--137. Amer. Math. Soc., 1993.

\bibitem{Curt}
K.~Delp, D.~Hoffoss, and J.~Manning.
\newblock Problems in {G}roups, {G}eometry, and {T}hree-{M}anifolds.
\newblock Preprint: arXiv:1512.04620.

\bibitem{Einsiedler}
M.~Einsiedler.
\newblock Ratner's theorem on {${\rm SL}(2,\Bbb R)$}-invariant measures.
\newblock {\em Jahresber. Deutsch. Math.-Verein.}, 108(3):143--164, 2006.

\bibitem{FLMS}
D.~Fisher, J.-F. Lafont, N.~Miller, and M.~Stover.
\newblock Finiteness of maximal geodesic submanifolds in hyperbolic hybrids.
\newblock Preprint: arXiv:1802.04619.

\bibitem{Garland}
H.~Garland.
\newblock On deformations of lattices in {L}ie groups.
\newblock In {\em Algebraic {G}roups and {D}iscontinuous {S}ubgroups ({P}roc.
  {S}ympos. {P}ure {M}ath., {B}oulder, {C}olo., 1965)}, pages 400--404. Amer.
  Math. Soc., Providence, R.I., 1966.

\bibitem{Gelander-Levit}
T.~Gelander and A.~Levit.
\newblock Counting commensurability classes of hyperbolic manifolds.
\newblock {\em Geom. Funct. Anal.}, 24(5):1431--1447, 2014.

\bibitem{GPS}
M.~Gromov and I.~Piatetski-Shapiro.
\newblock Nonarithmetic groups in lobachevsky spaces.
\newblock {\em Inst. Hautes \'Etudes Sci. Publ. Math.}, (66):93--103, 1988.

\bibitem{LO}
M.~Lee and H.~Oh.
\newblock Orbit closures of unipotent flows for hyperbolic manifolds with
  fuchsian ends.
\newblock Preprint: arXiv:1902.06621.

\bibitem{MRTG}
C.~Maclachlan and A.~W. Reid.
\newblock Commensurability classes of arithmetic {K}leinian groups and their
  {F}uchsian subgroups.
\newblock {\em Math. Proc. Cambridge Philos. Soc.}, 102(2):251--257, 1987.

\bibitem{MaclachlanReid}
C.~Maclachlan and A.~W. Reid.
\newblock {\em The arithmetic of hyperbolic 3-manifolds}, volume 219 of {\em
  Graduate Texts in Mathematics}.
\newblock Springer-Verlag, 2003.

\bibitem{Marden}
A.~Marden.
\newblock {\em Outer circles}.
\newblock Cambridge University Press, 2007.
\newblock An introduction to hyperbolic 3-manifolds.

\bibitem{MM}
G.~Margulis and A.~Mohammadi.
\newblock Arithmeticity of hyperbolic 3-manifolds containing infinitely many
  totally geodesic surfaces.
\newblock Preprint: arXiv:1902.07267.

\bibitem{MargulisICM}
G.~A. Margulis.
\newblock Discrete groups of motions of manifolds of nonpositive curvature.
\newblock In {\em Proceedings of the {I}nternational {C}ongress of
  {M}athematicians ({V}ancouver, {B}.{C}., 1974), {V}ol. 2}, pages 21--34.
  Canad. Math. Congress, Montreal, Que., 1975.

\bibitem{MargulisSuperrigidity}
G.~A. Margulis.
\newblock Arithmeticity of the irreducible lattices in the semisimple groups of
  rank greater than {$1$}.
\newblock {\em Invent. Math.}, 76(1):93--120, 1984.

\bibitem{MargulisBook}
G.~A. Margulis.
\newblock {\em Discrete subgroups of semisimple Lie groups}, volume~17 of {\em
  Ergebnisse der Mathematik und ihrer Grenzgebiete (3)}.
\newblock Springer-Verlag, 1991.

\bibitem{MMO}
C.~T. McMullen, A.~Mohammadi, and H.~Oh.
\newblock Geodesic planes in hyperbolic 3-manifolds.
\newblock {\em Invent. Math.}, 209(2):425--461, 2017.

\bibitem{MMO2}
C.~T. McMullen, A.~Mohammadi, and H.~Oh.
\newblock Geodesic planes in the convex core of an acylindrical 3-manifold.
\newblock Preprint: arXiv:1802.03853, 2018.

\bibitem{MR}
D.~B. McReynolds and A.~W. Reid.
\newblock The genus spectrum of hyperbolic 3--manifolds.
\newblock {\em Math. Res. Lett.}, 21:169--185, 2014.

\bibitem{Meyer}
J.~S. Meyer.
\newblock Totally geodesic spectra of arithmetic hyperbolic spaces.
\newblock {\em Trans. Amer. Math. Soc.}, 369(11):7549--7588, 2017.

\bibitem{MS}
S.~Mozes and N.~Shah.
\newblock On the space of ergodic invariant measures of unipotent flows.
\newblock {\em Ergodic Theory Dynam. Systems}, 15(1):149--159, 1995.

\bibitem{RaghunathanRigid}
M.~S. Raghunathan.
\newblock Cohomology of arithmetic subgroups of algebraic groups. {II}.
\newblock {\em Ann. of Math. (2)}, 87:279--304, 1967.

\bibitem{RatnerDuke}
M.~Ratner.
\newblock Raghunathan's topological conjecture and distributions of unipotent
  flows.
\newblock {\em Duke Math. J.}, 63(1):235--280, 1991.

\bibitem{RatnerOrbitClosure}
M.~Ratner.
\newblock Raghunathan's topological conjecture and distributions of unipotent
  flows.
\newblock {\em Duke Math. J.}, 63(1):235--280, 1991.

\bibitem{ReidKnot}
A.~W. Reid.
\newblock Arithmeticity of knot complements.
\newblock {\em J. London Math. Soc. (2)}, 43(1):171--184, 1991.

\bibitem{ReidTG}
A.~W. Reid.
\newblock Totally geodesic surfaces in hyperbolic {$3$}-manifolds.
\newblock {\em Proc. Edinburgh Math. Soc. (2)}, 34(1):77--88, 1991.

\bibitem{Rolfsen}
D.~Rolfsen.
\newblock {\em Knots and links}, volume~7 of {\em Mathematics Lecture Series}.
\newblock Publish or Perish, 1990.

\bibitem{Selberg}
A.~Selberg.
\newblock On discontinuous groups in higher-dimensional symmetric spaces.
\newblock In {\em Contributions to function theory (internat. {C}olloq.
  {F}unction {T}heory, {B}ombay, 1960)}, pages 147--164. Tata Institute of
  Fundamental Research, Bombay, 1960.

\bibitem{Thurston}
W.~Thurston.
\newblock Geometry and topology of three-manifolds.
\newblock http://http://library.msri.org/books/gt3m/.

\bibitem{VinbergDef}
E.~B. Vinberg.
\newblock Rings of definition of dense subgroups of semisimple linear groups.
\newblock {\em Izv. Akad. Nauk SSSR Ser. Mat.}, 35:45--55, 1971.

\bibitem{ZimmerCSR}
R.~J. Zimmer.
\newblock Strong rigidity for ergodic actions of semisimple {L}ie groups.
\newblock {\em Ann. of Math. (2)}, 112(3):511--529, 1980.

\bibitem{Zimmer}
R.~J. Zimmer.
\newblock {\em Ergodic theory and semisimple groups}, volume~81 of {\em
  Monographs in Mathematics}.
\newblock Birkh\"auser Verlag, Basel, 1984.

\end{thebibliography}

\end{document}